\newtheorem{theorem}{Theorem}[section]
\newtheorem{lemma}[theorem]{Lemma}
\newtheorem{corollary}[theorem]{Corollary}
\newtheorem{proposition}[theorem]{Proposition}
\newtheorem{definition}[theorem]{Definition}
\theoremstyle{remark}
\newtheorem*{ack*}{Acknowledgment}
\def\v{{\bf v}}
\def\F{{\mathcal F}}
\def\R{{\mathbb R}}
\def\C{{\mathbb C}}
\def\P{{\mathcal P}}
\def\L{{\mathcal A}}
\def\A{{\mathcal N}}
\def\dist{{\operatorname{dist}}}
\def\bas{\begin{align*}}
\def\eas{\end{align*}}
\def\bi{\begin{itemize}}
\def\ei{\end{itemize}}
\newenvironment{proof}{\noindent {\bf Proof} }{\endprf\par}
\def \endprf{\hfill  {\vrule height6pt width6pt depth0pt}\medskip}
\def\emph#1{{\it #1}}
\begin{document}

\title[Decouplings for surfaces in $\R^4$]{Decouplings for surfaces in $\R^4$}
\author{Jean Bourgain}
\address{School of Mathematics, Institute for Advanced Study, Princeton, NJ 08540}
\email{bourgain@@math.ias.edu}
\author{Ciprian Demeter}
\address{Department of Mathematics, Indiana University, 831 East 3rd St., Bloomington IN 47405}
\email{demeterc@@indiana.edu}

\keywords{decouplings, Lindel\"of hypothesis}
\thanks{The first author is partially supported by the NSF grant DMS-1301619. The second  author is partially supported  by the NSF Grant DMS-1161752}
%\thanks{ AMS subject classification: Primary 11L03; Secondary 42A16, 42A25, 52C35}
\begin{abstract}
We prove a sharp decoupling for nondegenerate surfaces in $\R^4$. This puts the progress in \cite{Bo} on the Lindel\"of hypothesis into a more general perspective.
\end{abstract}
\maketitle

\section{Introduction}
Let $\Phi=(\phi_1,\ldots,\phi_4):[0,1]\to\R^4$ be a nondegenerate $C^4$ curve, in the sense  that
\begin{equation}
\label{fe5}
\inf _{t_i\in[0,1]}|\operatorname{det}[\phi_j^{(i)}(t_i)]_{1\le i,j\le 4}|>0.
\end{equation}We will always write $$e(z)=e^{2\pi i z},$$
and for a positive weight $v$
$$\|f\|_{L^p(v)}=(\int|f(x)|^pv(x)dx)^{1/p}.$$
Also, for each ball $B$ centered at $c$ and with radius $R$, $w_{B}$ will denote the weight
$$w_{B}(x)= \frac{1}{(1+\frac{|x-c|}{R})^{100}}.$$
The following result was proved in \cite{Bo}.
\begin{theorem}
\label{thm1}
Let $I_1,I_2$ be intervals in $[0,1]$ with $\dist(I_1,I_2)\sim 1$ and consider  a partition of $[0,1]$ into intervals $I_\tau$ of length $N^{-1/2}$. Then for each $h_i:I_i\to\C$ and each ball $B_N\subset \R^4$ with radius $N$
$$\|(\prod_{i=1}^2|\int_{I_i}h_i(t)e(x\cdot\Phi(t))dt|)^{1/2}\|_{L^{12}(B_{N})}\lesssim_\epsilon N^{-1/6+\epsilon}(\prod_{i=1}^2\sum_{I_\tau}\|\int_{I_\tau}h_i(t)e(x\cdot\Phi(t))dt\|_{L^{6}(w_{B_{N}})}^6)^{\frac{1}{12}}.$$
\end{theorem}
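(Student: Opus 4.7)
The plan is to interpret the bilinear product as a Fourier extension from a $2$-dimensional surface in $\R^4$ and to apply decoupling for that surface. Setting
$$G(x):=\int_{I_1}\int_{I_2}h_1(t_1)h_2(t_2)\,e(x\cdot(\Phi(t_1)+\Phi(t_2)))\,dt_1\,dt_2,$$
one has $G(x)=\prod_{i=1}^2\int_{I_i}h_i(t)e(x\cdot\Phi(t))dt$, so $G$ is the extension operator associated with the surface
$$\Sigma:=\{\Phi(t_1)+\Phi(t_2):t_1\in I_1,\,t_2\in I_2\}\subset\R^4$$
applied to the tensor density $h_1\otimes h_2$. Squaring both sides, the claim amounts to an $\ell^6 L^6$ decoupling of $\Sigma$ into $N^{-1/2}\times N^{-1/2}$-caps on balls of radius $N$, with loss $N^{-1/3+\epsilon}$.

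The first step is to verify that $\Sigma$ is nondegenerate in $\R^4$. Its tangent plane together with the image of the second fundamental form at $\Phi(t_1)+\Phi(t_2)$ is spanned by $\Phi'(t_1),\Phi'(t_2),\Phi''(t_1),\Phi''(t_2)$; the hypothesis \eqref{fe5} combined with $\dist(I_1,I_2)\sim 1$ forces these four vectors to be linearly independent, uniformly in $(t_1,t_2)$. For the moment curve $\Phi(t)=(t,t^2,t^3,t^4)$ a direct computation yields the determinant $-24(t_1-t_2)^4$; the general case reduces to this by perturbation using \eqref{fe5}.

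The main step is to prove the $\ell^2 L^{12}$ decoupling for $\Sigma$, since $p=12$ is the critical decoupling exponent for nondegenerate $2$-surfaces in $\R^4$. I would follow the Bourgain-Demeter-Guth scheme, combining three inputs: (i) the Bennett-Carbery-Tao multilinear Kakeya inequality for four transverse tube families in $\R^4$, transversality being supplied by the previous paragraph; (ii) parabolic rescaling, which preserves the class of surfaces of the form $\Phi(I_1')+\Phi(I_2')$ with $\dist(I_1',I_2')$ controlled; (iii) a Bourgain-Guth broad/narrow decomposition converting the multilinear estimate back into linear decoupling. The output is an $\ell^2 L^{12}$ inequality with $N^\epsilon$ loss on balls of radius $N$.

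Finally, I would translate the $\ell^2 L^{12}$ decoupling into the required $\ell^6 L^6$ form using H\"older's inequality and the tube localization of each single-arc extension $\int_{I_\tau}h_i(t)e(x\cdot\Phi(t))dt$. The bilinear separation $\dist(I_1,I_2)\sim 1$ yields a transversality gain beyond plain $L^2$-orthogonality, and this is what converts an expected $N^{+\epsilon}$ loss into the sharper $N^{-1/3+\epsilon}$ loss in $L^6$ (equivalently $N^{-1/6+\epsilon}$ after taking square roots). The key obstacle is the parabolic rescaling step (ii): one must check that the class of sum-of-arcs surfaces is closed under the rescalings appearing at every scale of the induction, with uniform quantitative control on the nondegeneracy constants that feed into the multilinear Kakeya input.
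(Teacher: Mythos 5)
Your first step coincides with the paper's: you form the sum surface $\Sigma=\{\Phi(t_1)+\Phi(t_2)\}$, feed it the tensor density $h_1\otimes h_2$, and aim to decouple into $N^{-1/2}$-caps. But the main step of your plan is not viable. The claimed $\ell^2(L^{12})$ decoupling with only $N^\epsilon$ loss is false for \emph{every} compact $2$-dimensional surface in $\R^4$: testing with $g\equiv 1$ (the LHS is $\gtrsim 1$ near the origin, each cap extension has size $N^{-1}$ on a dual slab of volume $\sim N^3$, and there are $N$ caps) forces a lower bound $N^{\frac12-\frac3p}$ for the $\ell^2(L^p)$ constant, i.e. $N^{1/4}$ at $p=12$. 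Moreover $p=12$ is not the relevant exponent here at all: since $\|(\prod_i|\cdot|)^{1/2}\|_{L^{12}}^2=\|E_\Sigma(h_1\otimes h_2)\|_{L^6}$, the surface-level estimate needed is an $L^6$ one, and even the $\ell^2(L^6)$ decoupling \eqref{fe2} fails for some surfaces satisfying \eqref{fe1} (the paper's example $(t,s,t^2,ts)$, which contains a line); this is exactly why the paper proves the weaker $\ell^6(L^6)$ decoupling of Theorem \ref{ft1}, whose constant is $N^{\frac13+\epsilon}$, a \emph{positive} power, not $N^\epsilon$.

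Consequently the entire negative power $N^{-1/6}$ in the statement must be produced by a quantitative per-cap gain, and this is precisely where your sketch has no mechanism: "H\"older plus tube localization plus a transversality gain beyond $L^2$-orthogonality" is the assertion, not a proof. In the paper this gain is inequality \eqref{fe9}: for a cap $\Delta=J_1\times J_2$ the extension factors as the product of two single-arc extensions; after localizing to $B_N$ their cubes are Fourier supported in tubes of length $\sim N^{-1/2}$ and radius $\sim N^{-1}$ pointing in $1$-separated directions, and a wave-packet bilinear $L^2$ estimate gives $\|f_1f_2\|_{L^2(\R^4)}\lesssim N^{-2}\|f_1\|_2\|f_2\|_2$, hence a factor $N^{-2/3}$ per cap in $L^6$; combined with the $N^{\frac13+\epsilon}$ decoupling this yields $N^{-\frac13+\epsilon}$ for the squared statement. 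Without proving an estimate of this strength you cannot reach the exponent $-1/6$ (indeed your accounting, $N^\epsilon$ decoupling loss plus an unspecified gain, does not even balance against the true $N^{\frac13}$ decoupling cost). Two further, more minor, points: your reduction of the nondegeneracy of $\Sigma$ to the moment curve ``by perturbation'' is not a valid argument for a general curve satisfying \eqref{fe5} -- the paper instead writes the $4\times4$ determinant of $\Phi'(t),\Phi'(s),\Phi''(t),\Phi''(s)$ via a mean value theorem for divided differences (Polya--Szeg\"o) and uses $\dist(I_1,I_2)\sim1$; and for a codimension-two surface the right transversality input is the bilinear one built from the quadratic form in \eqref{fe16}, not a four-fold multilinear Kakeya for tube families, so step (i) of your scheme would also need to be rethought even for a corrected target exponent.
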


The inequality in Theorem \ref{thm1} played a key role in \cite{Bo} in the estimates on the Riemann zeta function on the critical line. In this paper we offer a different perspective on Theorem \ref{thm1} and show that it is a consequence of the decoupling theory for surfaces in $\R^4$ that we will develop here.

More precisely, consider a compact $C^3$ surface in $\R^4$
$$\Psi(t,s)=(\psi_1(t,s),\ldots,\psi_4(t,s)),\;\;\;\;\psi_i:[0,1]^2\to \R$$
which is assumed to satisfy the nondegeneracy condition
\begin{equation}
\label{fe1}
\operatorname{rank}[\Psi_t(t,s),\Psi_s(t,s),\Psi_{tt}(t,s),\Psi_{ss}(t,s),\Psi_{ts}(t,s)]=4,
\end{equation}
for each $t,s$. Our main new result is the following theorem.
\begin{theorem}
\label{ft1}For each $p\ge 2$,  $g:[0,1]^2\to\C$ and each ball $B_N\subset\R^4$ with radius $N$
$$
\|\int_{[0,1]^2}g(t,s)e(x\cdot\Psi(t,s))dtds\|_{L^p(w_{B_{N}})}\le $$
\begin{equation}
\label{fe3}
\le D(N,p)(\sum_{\Delta\subset [0,1]^2\atop{l(\Delta)=N^{-1/2}}}\|\int_{\Delta}g(t,s)e(x\cdot\Psi(t,s))dtds\|_{L^p(w_{B_{N}})}^p)^{1/p}
\end{equation}
 where the sum on the right hand side is over a partition of $[0,1]^2$ into squares $\Delta$ with side length $l(\Delta)=N^{-1/2}$, and
$$D(N,p)\lesssim_\epsilon N^{\frac12-\frac1p+\epsilon},\,2\le p\le 6$$
$$D(N,p)\lesssim_\epsilon N^{1-\frac4p+\epsilon},\,p\ge 6$$
\end{theorem}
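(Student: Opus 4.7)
The plan is to follow the Bourgain--Guth multilinear-to-linear strategy that drives the decoupling theory for hypersurfaces, adapted to the present codimension-$2$ setting.

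First, observe that it suffices to establish the critical estimate $D(N,6)\lesssim_\epsilon N^{1/3+\epsilon}$. The range $2\le p\le 6$ then follows by interpolating with the trivial $L^2$-orthogonality bound $D(N,2)\lesssim 1$, and the range $p\ge 6$ follows by interpolating with the trivial triangle-inequality bound $D(N,\infty)\lesssim N$ coming from the $\sim N$ squares of side $N^{-1/2}$ tiling $[0,1]^2$.

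Second, for the critical exponent I would set up a multilinear decoupling. For a cap $\Delta$ of side $N^{-1/2}$, the extension operator $E_\Delta g(x):=\int_\Delta g(t,s)\,e(x\cdot\Psi(t,s))\,dtds$ is Fourier-supported in a curved $2$-disk whose wave packets at scale $N$ are essentially $2$-plates of dimensions $N^{1/2}\times N^{1/2}\times N\times N$ in $\R^4$, with the two long directions spanning the normal plane to $\Psi$ at the center of $\Delta$. Given squares $\Delta_1,\dots,\Delta_M$ of side $N^{-1/2}$ whose associated normal $2$-planes are in sufficiently general position, the objective is an $M$-linear decoupling of the form
\[
\Bigl\|\prod_{i=1}^M|E_{\Delta_i}g|^{1/M}\Bigr\|_{L^6(B_N)}\lesssim_\epsilon N^\epsilon\prod_{i=1}^M\Bigl(\sum_{\Delta\subset\Delta_i}\|E_\Delta g\|_{L^6(w_{B_{N}})}^6\Bigr)^{1/(6M)}.
\]
The engine would be a Brascamp--Lieb / multilinear Kakeya estimate for the associated $2$-plate system, bootstrapped with $\ell^2$-orthogonality at fine scales and the usual induction on scales common in decoupling proofs.

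Third, I would combine the multilinear estimate with a Bourgain--Guth broad/narrow alternative: at each point of $B_N$ either the main contribution comes from caps whose normal planes are jointly transverse, in which case the multilinear bound applies directly, or else the contribution is concentrated on caps whose tangent/normal planes almost coincide, in which case the affine invariance of \eqref{fe1} permits a parabolic rescaling reducing the problem to the same decoupling at the smaller scale $N^{1/2}$. Iterating closes the induction and yields $D(N,6)\lesssim_\epsilon N^{1/3+\epsilon}$.

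The main obstacle is the multilinear/transversality step. Unlike the hypersurface case, the relevant wave packets are $2$-plates rather than $1$-tubes, so transversality of tangent directions alone no longer suffices; what is needed is a Brascamp--Lieb datum built from the normal $2$-planes whose constant remains uniformly controlled under subdivision. Extracting from the rank-$4$ hypothesis \eqref{fe1} a quantitative transversality statement for the five vectors $\Psi_t,\Psi_s,\Psi_{tt},\Psi_{ss},\Psi_{ts}$ that feeds into such a Brascamp--Lieb bound, and pinning down the correct multilinearity $M$, is the principal technical difficulty; once that is settled, the induction on scales proceeds along now-standard lines.
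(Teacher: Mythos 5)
There is a genuine gap, and it sits exactly at the step you flag as the ``main obstacle'': the multilinear/transversality structure you propose cannot work as stated for surfaces satisfying \eqref{fe1}, because such surfaces may contain lines. Take $\Psi(t,s)=(t,s,t^2,ts)$ and put $N^{1/2}$ unit coefficients at $N^{-1/2}$-separated points on a vertical segment $\{t=t_i\}$ inside each of two caps $R_1,R_2$ separated in $t$ (these are transverse in every reasonable sense, e.g.\ in the sense \eqref{fe16} used in the paper). Each factor is then a one-dimensional Dirichlet kernel in the variable $x_2+t_ix_4$, and a direct computation on $B_N$ shows that the bilinear $\ell^6(L^6)$ decoupling constant at scale $N^{-1/2}$ is at least a positive power of $N$ (about $N^{1/4}$), not $N^\epsilon$; the same mechanism is the paper's counterexample to the $\ell^2$ decoupling \eqref{fe2}. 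Equally problematic is your narrow case: the caps that fail to be transverse to a given cap do \emph{not} cluster into a single smaller square whose contribution can be parabolically rescaled to ``the same problem at scale $N^{1/2}$''. By the structure of the quadratic form governing transversality (Proposition \ref{fp1} of the paper) they fill one or two strips of width $K^{-1}$ and full length, and the surface restricted to such a strip can contain lines, so no curvature is available there: only the trivial decoupling with loss $K^{1-2/p}$ holds (Lemma \ref{fl1}), and the paper shows this is sharp. Note also that if your scheme did work as written ($N^\epsilon$-loss multilinear step plus a lossless rescaling of the narrow part), it would yield $D(N,6)\lesssim_\epsilon N^\epsilon$, contradicting the sharp lower bound $D(N,6)\gtrsim N^{1/3}$ in \eqref{fe30}; your outline never accounts for where the $N^{1/3}$ must come from. (As a smaller point, your displayed $M$-linear inequality is vacuous as written, since the $\Delta_i$ and the sub-caps $\Delta\subset\Delta_i$ are both taken of side $N^{-1/2}$.)

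For comparison, the paper's route is: reduce a general $\Psi$ satisfying \eqref{fe1} to the quadratic models $\Psi_{\bf A}$ by Taylor approximation and an iteration in scales; define transversality of two sets through the quadratic form $c_{1,{\bf A}}(t_1-t_2)^2+c_{2,{\bf A}}(t_1-t_2)(s_1-s_2)+c_{3,{\bf A}}(s_1-s_2)^2$; prove a \emph{bilinear} $L^4$ restriction-type estimate by an explicit change of variables whose Jacobian is this form (C\'ordoba style, via Plancherel) --- no Brascamp--Lieb or multilinear Kakeya input is needed; run a Bourgain--Guth decomposition in which the non-transverse part (the $O(K)$ squares in the degenerate strips) is handled by the trivial decoupling, producing exactly the $K^{p-2}$ factor that accumulates to the $N^{\frac12-\frac1p}$-type loss; and finally close a bootstrapping argument relating $D(N,p)$ and the bilinear constant, together with $L^2$ orthogonality and parabolic rescaling, to get $D(N,6)\lesssim_\epsilon N^{1/3+\epsilon}$. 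Your opening reduction (prove $p=6$ and interpolate with $p=2$ and $p=\infty$) agrees with the paper, but the core of your argument would need to be replaced by this bilinear-plus-trivial-decoupling mechanism, or by some other device that correctly quantifies the contribution of the flat (line) directions.
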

\bigskip

A standard computation with $g=1_{[0,1]^2}$ reveals that the Theorem is essentially sharp, more precisely
\begin{equation}
\label{fe30}
D(N,p)\gtrsim N^{\frac12-\frac1p}\text{ for }2\le p\le 6,\;\;D(N,p)\gtrsim N^{1-\frac4p}\text{ for }6\le p\le \infty.
\end{equation}
It is worth recording the following trivial upper bound that follows from the Cauchy--Schwartz inequality
\begin{equation}
\label{fe3008}
D(N,p)\lesssim N^{1-\frac1p},\text{ for } p\ge 1.
\end{equation}
We will prove that $D(N,6)\lesssim_\epsilon N^{\frac16+\epsilon}$. The estimates for other $p$  will follow by interpolation with the trivial $p=2$ and $p=\infty$ results.
\bigskip

An equivalent way of describing a  surface that satisfies \eqref{fe1} is the fact that it is locally non flat, in the following sense. For each $(t_0,s_0)\in [0,1]^2$ there is no unit vector $\gamma\in\R^4$ such that
$$|\langle\gamma,\Psi(t_0+\Delta t,s_0+\Delta s)-\Psi(t_0,s_0)\rangle|=O(|(\Delta t,\Delta s)|^3), \;\;\;\text{when } |(\Delta t,\Delta s)|\;\to 0.$$
This is easily seen by using Taylor's formula with third order error terms.
\bigskip

Near every  $(t_0,s_0)\in [0,1]^2$, the surface can be represented  with respect to an appropriate system of coordinates as
\begin{equation}
\label{fe11}
(t,s, A_1t^2+2A_2ts+A_3s^2, A_4t^2+2A_5ts+A_6s^2)+O(|(t,s)|^3).
\end{equation}
It suffices to choose two perpendicular axes in the tangent plane at $(t_0,s_0)$  and to apply the Taylor expansion for the surface with respect to these variables. It is easy to see that $\Psi$ satisfies \eqref{fe1} at $(t_0,s_0)$ if and only if
\begin{equation}
\label{fe12}
\operatorname{rank}\left[ \begin{array}{ccc}
A_1 & A_2 & A_3 \\
A_4 & A_5 & A_6 \end{array} \right]=2
\end{equation}
\bigskip

Let us now place \eqref{fe3} in the context of the more general decoupling theory from  \cite{BD3} and \cite{BD4}. These papers completely settle the case of hypersurfaces in all dimensions, whose Gaussian curvature is nonzero everywhere. In two dimensions, the sharp inequality takes the form
$$\|\int_{[0,1]}h(t)e(x_1t+x_2\gamma(t))dt\|_{L^6(w_{B_N})}\lesssim_\epsilon $$
\begin{equation}
\label{fe6}
N^\epsilon(\sum_\tau\|\int_{I_\tau}h(t)e(x_1t+x_2\gamma(t))dt\|_{L^6(w_{B_N})}^2)^{1/2},
\end{equation}
where $I_\tau$ are intervals of length  $N^{-1/2}$ that partition $[0,1]$ and $\gamma:[0,1]\to\R$ satisfies the curvature condition  $$\inf_{0\le t\le 1}|\gamma''(t)|>0.$$

The paper \cite{BD4} also  analyzes the decoupling theory for curves satisfying \eqref{fe5} in arbitrary dimensions $n$, but the picture for $n\ge 3$ is rather incomplete. The natural scaling for curves dictates that the intervals $I_\tau$ should have larger length $N^{-1/n}$, in order to run the machinery from \cite{BD3}. However,   Theorem \ref{thm1} goes against this principle and uses intervals $I_\tau$ of length $N^{-1/2}$, rather than $N^{-1/4}$. One of the goals of this paper is to clarify why it is possible to work with such a scale.

While \cite{BD3} and \cite{BD4} deal with manifolds of codimension one and dimension one, respectively,  inequality \eqref{fe3} is the first example that does not fall in either category.  We will argue that the natural scale to work with  in the context of surfaces in $\R^4$ is $N^{-1/2}$, the same as for hypersurfaces in any dimension. This will allow for the argument  in \cite{BD3} to be carried over in this context. Moreover, Theorem \ref{thm1} will be seen to be a rather immediate consequence of  Theorem \ref{ft1}. See Section \ref{fs2} below.

\bigskip

Inequality \eqref{fe3} at the critical index $p=6$ is an $l^6(L^6)$ decoupling, a bit weaker than the $l^2(L^6)$ decoupling
$$
\|\int_{[0,1]^2}g(t,s)e(x\cdot\Psi(t,s))dtds\|_{L^6(w_{B_{N}})}\lesssim_\epsilon
$$\begin{equation}
\label{fe2}
N^{\epsilon}(\sum_{\Delta\subset [0,1]^2\atop{l(\Delta)=N^{-1/2}}}\|\int_{[0,1]^2}g(t,s)e(x\cdot\Psi(t,s))dtds\|_{L^6(w_{B_{N}})}^2)^{1/2}
\end{equation}
since  \eqref{fe2}  automatically implies \eqref{fe3} via H\"older's inequality. We will see below however  that   \eqref{fe2} is false for some $\Psi$ satisfying \eqref{fe1}. Indeed, a standard discretization argument as in \cite{BD3} shows that if  \eqref{fe2} were true, it would imply the following estimate for exponential sums
\begin{equation}
\label{fe4}
\frac{1}{|B_N|^{1/6}}\|\sum_{n=1}^Na_ne(x\cdot\Psi(\xi_n))\|_{L^6(B_N)}\lesssim_\epsilon N^{\epsilon}\|a_n\|_{l^2},
\end{equation}
for arbitrary $a_n\in \C$ and $N^{-1/2}$ separated $\xi_n\in[0,1]^2$. Choose now
$$\Psi(t,s)=(t,s,t^2,ts),$$
which is easily seen to satisfy \eqref{fe1}. This surface contains the line  $$\{(0,s,0,0):s\in\R\}.$$ Choose $N^{1/2}$ equidistributed points $\xi_n$ on the line segment $\{0\}\times [0,1]$ which are $N^{-1/2}$ separated, and let $a_n=1$. It is easy to see that \eqref{fe4} will fail for this choice.  We point out that \eqref{fe2} holds true for other $\Psi$ such as
$$\Psi(t,s)=(t,s,t^2,s^2),$$
a consequence of \eqref{fe6} (applied twice) and Fubini. This easy argument is not applicable for more ``twisted" $\Psi$, but at least explains the $L^6$ numerology, and shows  that  \eqref{fe3} generalizes \eqref{fe6}. It will later become clear that the proof of \eqref{fe6}  in \cite{BD3} is a good template for our  proof of \eqref{fe3}. In particular, both inequalities rely crucially on a bilinear estimate.

Theorem \ref{ft1} has natural extensions to the case when $\Psi:[0,1]^k\to\R^{2k}$, $k\ge 3$, but we will not pursue them here. The decoupling constants will depend on the dimension of the largest affine subspace of the manifold $\Psi$. This is similar to the role played by signature in the decoupling theory for hyperbolic paraboloids developed in \cite{BD4}.
\bigskip

\section{The proof of Theorem \ref{thm1}}
\label{fs2}

This section will be devoted to proving  Theorem \ref{thm1} assuming Theorem \ref{ft1}, for $p=6$.

Given a curve $\Phi$ satisfying \eqref{fe5} and intervals $I_1,I_2$ as in Theorem \ref{thm1}, we construct the surface

\begin{equation}
\label{fe8}
\Psi_\Phi(t,s)=(\phi_1(t)+\phi_1(s),\ldots, \phi_4(t)+\phi_4(s)),\;\;t\in I_1,\;s\in I_2.
\end{equation}
We prove  that $\Psi_\Phi$ satisfies \eqref{fe1} on $I_1\times I_2$. It will suffice if we show that
\begin{equation}
\label{fe7}
\operatorname{det}\left[ \begin{array}{cccc}
\phi_1'(t) & \phi_2'(t) & \phi_3'(t) & \phi_4'(t) \\ \phi_1'(s) & \phi_2'(s) & \phi_3'(s) & \phi_4'(s)\\ \phi_1''(t) & \phi_2''(t) & \phi_3''(t) & \phi_4''(t) \\ \phi_1''(s) & \phi_2''(s) & \phi_3''(s) & \phi_4''(s)
\end{array} \right]\not =0,\,\,\text{for }t\in I_1,\;s\in I_2.
\end{equation}
Note that $$\operatorname{det}\left[ \begin{array}{cccc}
\phi_1'(t) & \phi_2'(t) & \phi_3'(t) & \phi_4'(t) \\ \phi_1'(s) & \phi_2'(s) & \phi_3'(s) & \phi_4'(s)\\ \phi_1''(t) & \phi_2''(t) & \phi_3''(t) & \phi_4''(t) \\ \phi_1''(s) & \phi_2''(s) & \phi_3''(s) & \phi_4''(s)
\end{array} \right]=\lim_{\epsilon\to 0}\frac1{\epsilon^2}\operatorname{det}\left[ \begin{array}{cccc}
\phi_1'(t) & \phi_2'(t) & \phi_3'(t) & \phi_4'(t) \\ \phi_1'(s) & \phi_2'(s) & \phi_3'(s) & \phi_4'(s)\\ \phi_1'(t+\epsilon) & \phi_2'(t+\epsilon) & \phi_3'(t+\epsilon) & \phi_4'(t+\epsilon) \\ \phi_1'(s+\epsilon) & \phi_2'(s+\epsilon) & \phi_3'(s+\epsilon) & \phi_4'(s+\epsilon)
\end{array} \right].$$
A generalization of the Mean-Value Theorem (see \cite{PS}, Voll II, part V, Chap 1, No. 95) guarantees that
$$\operatorname{det}\left[ \begin{array}{cccc}
\phi_1'(t) & \phi_2'(t) & \phi_3'(t) & \phi_4'(t) \\ \phi_1'(s) & \phi_2'(s) & \phi_3'(s) & \phi_4'(s)\\ \phi_1'(t+\epsilon) & \phi_2'(t+\epsilon) & \phi_3'(t+\epsilon) & \phi_4'(t+\epsilon) \\ \phi_1'(s+\epsilon) & \phi_2'(s+\epsilon) & \phi_3'(s+\epsilon) & \phi_4'(s+\epsilon)
\end{array} \right]=$$$$\epsilon^2(t-s)^2(t+\epsilon-s)(s+\epsilon-t)\operatorname{det}\left[ \begin{array}{cccc}
\phi_1'(\tau_1) & \phi_2'(\tau_1) & \phi_3'(\tau_1) & \phi_4'(\tau_1) \\ \phi_1''(\tau_2) & \phi_2''(\tau_2) & \phi_3''(\tau_2) & \phi_4''(\tau_2)\\ \phi_1'''(\tau_3) & \phi_2'''(\tau_3) & \phi_3'''(\tau_3) & \phi_4'''(\tau_3) \\ \phi_1''''(\tau_4) & \phi_2''''(\tau_4) & \phi_3''''(\tau_4) & \phi_4''''(\tau_4)
\end{array} \right]$$
for some $\tau_i\in[0,1]$ depending on $t,s,\epsilon$. It is now clear that \eqref{fe7} follows from \eqref{fe5}.
\bigskip

For an arbitrary $S\subset [0,1]^2$, $g:S\to\C$ and $\Psi$ satisfying  \eqref{fe1} define the extension operator
$$E_{S,\Psi}g(x)=\int_S g(t,s)e(x\cdot \Psi(t,s))dtds.$$
Going back to Theorem \ref{thm1} we note that
$$\prod_{i=1}^2\int_{I_i}h_i(t)e(x\cdot\Phi(t))dt=E_{I_1\times I_2,\Psi_\Phi}g(x)$$
with $g(t,s)=h_1(t)h_2(s).$ By applying Theorem \ref{ft1} it follows that
$$\|(\prod_{i=1}^2|\int_{I_i}h_i(t)e(x\cdot\Phi(t))dt|)^{1/2}\|_{L^{12}(B_{N})}\lesssim_\epsilon N^{\frac16+\epsilon}(\sum_{\Delta\subset [0,1]^2\atop{l(\Delta)=N^{-1/2}}}\|E_{\Delta,\Psi_\Phi}g\|_{L^6(w_{B_{N}})}^6)^{1/12}.$$
\bigskip

Fix $\Delta=J_1\times J_2$ from the summation above. To prove Theorem \ref{thm1}, it will suffice to argue that
$$\|E_{\Delta,\Psi_\Phi}g\|_{L^6(w_{B_{N}})}\lesssim$$
\begin{equation}
\label{fe9}
\lesssim N^{-\frac23}\|\int_{J_1}h_1(t)e(x\cdot\Phi(t))dt\|_{L^6(w_{B_{N}})}\|\int_{J_2}h_2(t)e(x\cdot\Phi(t))dt\|_{L^6(w_{B_{N}})}.
\end{equation}
This inequality will follow from the following transversality result.

\begin{lemma}Let $T_1,T_2$ be two cylindrical tubes in $\R^4$ with  length $\sim N^{-1/2}$ in the direction  $\v_i$ and radius $\sim N^{-1}$. Assume the angle between $\v_1$ and $\v_2$ is $\sim 1$. Let $f_i$ be a function which is Fourier supported in $T_i$. Then
$$\|f_1f_2\|_{L^2(\R^4)}\lesssim N^{-2}\|f_1\|_{L^2(\R^4)}\|f_2\|_{L^2(\R^4)}.$$
\end{lemma}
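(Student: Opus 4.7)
The plan is to pass to the Fourier side and reduce the inequality to a uniform volume bound for the intersection of two transverse tubes in $\R^4$. Since $\hat f_i$ is supported in $T_i$, Plancherel and the convolution identity give
$$\|f_1 f_2\|_{L^2(\R^4)} = \|\hat f_1 \ast \hat f_2\|_{L^2(\R^4)},$$
and at each point $\xi$ the integrand of $(\hat f_1 \ast \hat f_2)(\xi) = \int \hat f_1(\eta)\hat f_2(\xi - \eta)\,d\eta$ vanishes outside $T_1 \cap (\xi - T_2)$. Applying Cauchy--Schwarz in $\eta$ and then integrating in $\xi$ using Fubini leaves
$$\|f_1 f_2\|_{L^2}^2 \le \Big(\sup_{\xi \in \R^4} |T_1 \cap (\xi - T_2)|\Big)\, \|f_1\|_{L^2}^2 \|f_2\|_{L^2}^2,$$
so the task reduces to showing $|T_1 \cap (\xi - T_2)| \lesssim N^{-4}$ uniformly in $\xi$.

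For this geometric estimate I would choose orthonormal coordinates in which $\v_1 = e_1$ and $\v_2 = \cos\theta\, e_1 + \sin\theta\, e_2$ with $\sin\theta \sim 1$. In these coordinates $T_1$ is, up to translation, the slab
$$\{|x_1| \lesssim N^{-1/2},\ x_2^2 + x_3^2 + x_4^2 \lesssim N^{-2}\},$$
and $\xi - T_2$ is described by analogous constraints with $\v_2$ in place of $\v_1$ and a translation by some vector $c$. The two ``radial'' constraints on $(x_3, x_4)$ from the two tubes each confine $(x_3, x_4)$ to an $N^{-1}$-disk, so their intersection has area $\lesssim N^{-2}$. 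In the $(x_1, x_2)$-plane the two short-direction constraints read $|x_2| \lesssim N^{-1}$ and $|-\sin\theta\, x_1 + \cos\theta\, x_2 - c_2| \lesssim N^{-1}$; these are linearly independent because $\sin\theta \sim 1$, so they cut out a parallelogram of area $\lesssim N^{-2}$, while the two long-direction constraints of size $N^{-1/2}$ are non-binding. Multiplying the two $N^{-2}$ factors produces the desired $N^{-4}$ bound.

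The only place the hypothesis enters is in this transversality step, where the angular separation $\angle(\v_1,\v_2) \sim 1$ is exactly what makes the two short-direction strips in $\mathrm{span}(e_1,e_2)$ genuinely transverse; without it the intersection would degenerate into a long strip and the bound would deteriorate to $N^{-1}\cdot N^{-1/2}$. I expect no substantial obstacle beyond organizing this short linear-algebra computation.
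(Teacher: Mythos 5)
Your proof is correct, and it takes a genuinely different route from the paper. You argue entirely on the Fourier side: Plancherel converts $\|f_1f_2\|_{L^2}$ into $\|\hat f_1 * \hat f_2\|_{L^2}$, Cauchy--Schwarz in the convolution integral reduces everything to the uniform bound $\sup_\xi |T_1 \cap (\xi - T_2)| \lesssim N^{-4}$, and the transversality $|\sin\theta|\sim 1$ enters only in the elementary computation that the two width-$N^{-1}$ constraints in the $(x_1,x_2)$-plane cut out a parallelogram of area $\lesssim N^{-2}$, while the $(x_3,x_4)$ constraints contribute another $N^{-2}$; the factor-of-four volume estimate is sound since the intersection sits inside a product of these two planar sets. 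The paper instead works on the physical side: it decomposes each $f_i$ into $L^2$-normalized wave packets adapted to $N\times N\times N\times N^{1/2}$ plates dual to the tubes and orthogonal to $\v_i$, bounds the fourfold overlap $\int|\phi_{P_1}\phi_{P_1'}\phi_{P_2}\phi_{P_2'}|\le N^{-4}c(P_1,P_1')c(P_2,P_2')$ using the transversality of the plates, and sums with H\"older. The two arguments exploit the same geometric mechanism (transverse slabs meet in small measure), but yours is the classical convolution/Cauchy--Schwarz bilinear $L^2$ argument and is shorter and more self-contained, avoiding the wave packet machinery, the rapid-decay tails, and the weights $c(P,P')$; the paper's plate decomposition is the formulation that meshes with the broader wave-packet framework of the decoupling argument. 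One small point to make explicit: the hypothesis that the angle between $\v_1$ and $\v_2$ is $\sim 1$ should be read as genuine transversality of the directions (i.e.\ $|\sin\theta|\sim 1$), which is what your strip computation uses.
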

\begin{proof}
Start with a wave packet decomposition
$$f_i=\sum_{P\in\P_i}a_P\phi_P,$$
where $\P_i$ is a finitely overlapping cover of $\R^4$ with $N\times N\times N\times N^{1/2}$ rectangular plates $P_i$ orthogonal to $\v_i$. In particular, we may assume that
$$\int|\phi_P|^2=1,\;\;P\in\P_1\cup\P_2$$
$$\|f_i\|_2\sim(\sum_{P\in\P_i}|a_P|^2)^{1/2},$$
and also that $\phi_P$ has rapid decay away from $P$. The transversality guaranteed by the angle between $\v_1,\v_2$ forces
$$\int|\phi_{P_1}\phi_{P_1'}\phi_{P_2}\phi_{P_2'}|\le N^{-4}c(P_1,P_1')c(P_2,P_2'), \;\;P_i,P_i'\in\P_i,$$
where $c(P_i,P_i')$ are weights that decrease rapidly with the distance between $P_i,P_i'$. We conclude that
$$\|f_1f_2\|_{L^2(\R^4)}^2\le N^{-4}\sum_{P_1,P_1'\in\P_1}|a_{P_1}a_{P_1'}c(P_1,P_1')|\sum_{P_2,P_2'\in\P_2}|a_{P_2}a_{P_2'}c(P_2,P_2')|.$$
By a few applications of H\"older's inequality, this is further bounded by
$$N^{-4}\sum_{P_1\in\P_1}|a_{P_1}|^2\sum_{P_2\in\P_2}|a_{P_2}|^2,$$
as desired.
\end{proof}
\bigskip

To see \eqref{fe9}, consider a positive weight $v_{B_N}$ in the Schwartz class with Fourier support in the $N^{-1}$ neighborhood of the origin in $\R^4$ and which is $\ge 1$ on $B_N$. Note that the functions
$$f_i(x)=(v_{B_N}(x)\int_{J_i}h_i(t)e(x\cdot\Phi(t))dt)^3$$
satisfy the requirements of the lemma from above. We get
$$\|E_{\Delta,\Psi_\Phi}g\|_{L^6(B_{N})}\lesssim \|f_1f_2\|_{L^2(\R^4)}^{1/3}\lesssim$$$$\lesssim N^{-2/3}\|\int_{J_1}h_1(t)e(x\cdot\Phi(t))dt\|_{L^6(v^6_{B_{N}})}\|\int_{J_2}h_2(t)e(x\cdot\Phi(t))dt\|_{L^6(v^6_{B_{N}})}.$$
Inequality \eqref{fe9} now follows from standard manipulations, by using the fact that $v_{B_N}$ has Schwartz decay, while $w_{B_N}$ has prescribed polynomial decay.

\section{Reduction to quadratic surfaces}
This section will clarify why in Theorem \ref{ft1} the decoupling intervals $\Delta$ have scale $N^{-1/2}$. The key will be the approximation of nondegenerate surfaces by quadratic ones.

For a $\Psi$ satisfying \eqref{fe1} we let $\A_N=\A_N(\Psi)$ be the $N^{-1}$ neighborhood of the surface $\Psi([0,1]^2)$. Consider a fixed finitely overlapping cover of $[0,1]^2$ with squares $\Delta$ of side length $N^{-1/2}$, and let $\P_N$ be the associated cover  of $\A_N$ with $N^{-1}$ neighborhoods $\theta$ of $\Psi(\Delta)$. Note that each $\theta$ is essentially a rectangular region with dimensions $N^{-1},N^{-1}, N^{-1/2}, N^{-1/2}$. We will denote by $f_\theta$ an appropriate smooth Fourier restriction of $f$ to $\theta$ so that
$$f=\sum_{\theta\in \P_N}f_\theta.$$

Let $K_{p,\Psi}(N)$ denote the best constant such that
\begin{equation}
\label{fe18}
\|f\|_{L^p(\R^4)}\le K_{p,\Psi}(N)(\sum_{\theta\in \P_N}\|f_\theta\|_{L^p(\R^4)}^p)^{1/p}
\end{equation}
holds for each $f$ Fourier supported in $\A_N(\Psi)$.
We first observe that  Theorem \ref{ft1} is equivalent with proving that
\begin{equation}
\label{fe10}
K_{6,\Psi}(N)\lesssim_\epsilon N^{\frac13+\epsilon}.
\end{equation}
This can be seen by foliating $\A_N(\Psi)$ into translates of $\Psi$.  We refer the reader to \cite{BD3} for  details for a similar statement in a related context.

Next, we will prove that \eqref{fe10} for a given $\Psi$  follows if we assume \eqref{fe10} for quadratic surfaces of the form
$$\Psi_{\bf{A}}(t,s)=(t,s, A_1t^2+2A_2ts+A_3s^2, A_4t^2+2A_5ts+A_6s^2),$$
with ${\bf A}=(A_1,\ldots,A_6)$ satisfying \eqref{fe12}. To see this, let $f$ be Fourier supported in $\A_N(\Psi)$. Since $\widehat{f}$ is also Fourier supported in  $\A_{N^{\frac23}}(\Psi)$, we have the initial decoupling
\begin{equation}
\label{fe14}
\|f\|_{L^p(\R^4)}\le K_{p,\Psi}(N^{\frac23})(\sum_{\tau\in \P_{N^{\frac23}}}\|f_\tau\|_{L^p(\R^4)}^p)^{1/p}.
\end{equation}
Note that each $f_\tau$ is  Fourier supported on $\tau\cap \A_N(\Psi)$. Also,  \eqref{fe11} shows that after a rotation  $\tau\cap \A_N(\Psi)$ is a subset of $\A_{O(N)}(\Psi_{\bf A})$ for some ${\bf A}$ satisfying \eqref{fe12}. By applying the linear rescaling
$$(t_1,\ldots,t_4)\mapsto(N^{1/3}t_1,N^{1/3}t_2,N^{2/3}t_3,N^{2/3}t_4)$$
with respect to the local system of coordinates,
 $\tau\cap \A_N(\Psi)$ is mapped into $\A_{O(N^{1/3})}(\Psi_{\bf A})$.
By rescaling back, it follows that
\begin{equation}
\label{fe15}
\|f_\tau\|_{L^p(\R^4)}\le K_{p,\Psi_{\bf A}}(N^{1/3})(\sum_{\theta\in \P_N\atop{\theta\subset \tau}}\|f_\theta\|_{L^p(\R^4)}^p)^{1/p}.
\end{equation}
The vector ${\bf A}$ will of course depend on $\tau$. Combining \eqref{fe14} and \eqref{fe15} we conclude that there exists $C_\Psi$ depending only on $\Psi$, such that for each $N$
\begin{equation}
\label{fe13}
K_{p,\Psi}(N)\le K_{p,\Psi}(N^{\frac23})\sup_{{\bf A}\in\L}K_{p,\Psi_{\bf A}}(N^{\frac13})
\end{equation}
with $$\L=$$$$\left\{{\bf A}:\;|A_i|\le C_\Psi, \;\max\left\{\left|\operatorname{det}\left[ \begin{array}{ccc}
A_1 & A_2  \\
A_4 & A_5 \end{array} \right]\right|, \left|\operatorname{det}\left[ \begin{array}{ccc}
A_1 & A_3  \\
A_4 & A_6 \end{array} \right]\right|, \left|\operatorname{det}\left[ \begin{array}{ccc}
A_3 & A_2  \\
A_6 & A_5 \end{array} \right]\right| \right\}\ge C_\Psi^{-1}\right\}.$$
Thus, assuming
$$\sup_{{\bf A}\in\L}K_{6,\Psi_{\bf A}}(N)\lesssim_{\epsilon,C_\Psi} N^{\frac13+\epsilon},$$
\eqref{fe10} will follow by iterating \eqref{fe13}.

From now on we will work with the surface $\Psi_{\bf A}$, for a fixed ${\bf A}\in \L$, and will show that
$$K_{6,\Psi_{\bf A}}(N)\lesssim_\epsilon N^{\frac13+\epsilon},$$
A careful analysis of the forthcoming argument will show that  the implicit constant will depend on $C_\Psi$. We will never specify the exact dependence.

\bigskip

\section{Transversality and a bilinear theorem}
Fix ${\bf A}\in \L$. The extension operator $E$ will from now on be implicitly understood to be with respect to $\Psi_{\bf A}$. Define
$$c_{1,{\bf A}}=\operatorname{\det} \left[ \begin{array}{ccc}
A_1 & A_2  \\
A_4 & A_5 \end{array} \right], \;\;c_{2,{\bf A}}=\operatorname{det}\left[ \begin{array}{ccc}
A_1 & A_3  \\
A_4 & A_6 \end{array} \right], \;\;c_{3,{\bf A}}=\operatorname{det}\left[ \begin{array}{ccc}
A_6 & A_5  \\
A_3 & A_2 \end{array} \right].$$
\begin{definition}
Let $\nu\le 1$. We say that two sets $S_1,S_2\subset [0,1]^2$ are  $\nu$-transverse if
\begin{equation}
\label{fe16}
c_{1,{\bf A}}(t_1-t_2)^2+c_{2,{\bf A}}(t_1-t_2)(s_1-s_2)+ c_{3,{\bf A}}(s_1-s_2)^2\ge \nu
\end{equation}
for each $(t_i,s_i)\in S_i.$
\end{definition}
The following bilinear theorem will play a key role in our approach.
\begin{theorem}
\label{ft3}
Let $R_1$, $R_2$ be two $\nu$-transverse squares in $[0,1]^2$. Then for each $g_i:R_i\to\C$ we have
$$\||E_{R_1}g_1E_{R_2}g_2|^{1/2}\|_{L^4(\R^4)}\lesssim_\nu (\|g_1\|_{L^2(R_1)}\|g_2\|_{L^2(R_2)})^{1/2}.$$
\end{theorem}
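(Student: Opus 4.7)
The plan is to square the bilinear $L^4$ inequality into a bilinear $L^2$ bound and extract the gain from the transversality via Plancherel plus a Jacobian calculation. Since
$$\||E_{R_1}g_1\cdot E_{R_2}g_2|^{1/2}\|_{L^4(\R^4)}^2 = \|E_{R_1}g_1\cdot E_{R_2}g_2\|_{L^2(\R^4)},$$
it suffices to establish the bilinear $L^2$ bound
$$\|E_{R_1}g_1\cdot E_{R_2}g_2\|_{L^2(\R^4)}\lesssim_\nu \|g_1\|_{L^2(R_1)}\|g_2\|_{L^2(R_2)}.$$
Interpreting $E_{R_i}g_i$ as the inverse Fourier transform of the compactly supported measure $g_i\,d\sigma_i$ obtained by pushing $g_i\,dt\,ds$ forward under $\Psi_{\bf A}$, Plancherel converts the left-hand side into $\|g_1\,d\sigma_1 * g_2\,d\sigma_2\|_{L^2(\R^4)}$, so the problem reduces to controlling this convolution of two surface-carried measures.

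The heart of the argument is to analyze the change of variables
$$\Phi: R_1\times R_2\to\R^4,\qquad \Phi(t_1,s_1,t_2,s_2)=\Psi_{\bf A}(t_1,s_1)+\Psi_{\bf A}(t_2,s_2).$$
Subtracting the first two columns of the Jacobian matrix from the last two makes it block triangular, and a short computation yields
$$|J_\Phi(t_1,s_1,t_2,s_2)|=4\bigl|c_{1,{\bf A}}(t_1-t_2)^2 + c_{2,{\bf A}}(t_1-t_2)(s_1-s_2) + c_{3,{\bf A}}(s_1-s_2)^2\bigr|,$$
which is $\ge 4\nu$ on $R_1\times R_2$ by the $\nu$-transversality hypothesis \eqref{fe16}. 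Moreover, after using the two linear equations $t_1+t_2=\xi_1$, $s_1+s_2=\xi_2$ to eliminate $(t_2,s_2)$, the fiber over $\xi$ is cut out in $(t_1,s_1)$-space by two real conics whose leading quadratic forms are proportional to $Q_1(t_1,s_1):=A_1t_1^2+2A_2t_1s_1+A_3s_1^2$ and $Q_2$; the rank-$2$ condition \eqref{fe12} together with the nonvanishing Jacobian guarantees that the system is non-degenerate, so B\'ezout bounds the number of preimages by $O(1)$.

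Consequently $g_1\,d\sigma_1 * g_2\,d\sigma_2$ is absolutely continuous with density
$$F(\xi)\;=\;\sum_{\Phi(z)=\xi}\frac{g_1(t_1,s_1)\,g_2(t_2,s_2)}{|J_\Phi(z)|},$$
and Cauchy--Schwarz on this $O(1)$-element sum together with the change of variables $d\xi=|J_\Phi|\,dt_1\,ds_1\,dt_2\,ds_2$ gives
$$\|F\|_{L^2(\R^4)}^2 \;\lesssim\; \int_{R_1\times R_2}\frac{|g_1(t_1,s_1)|^2|g_2(t_2,s_2)|^2}{|J_\Phi|}\,dt_1\,ds_1\,dt_2\,ds_2 \;\lesssim\; \nu^{-1}\|g_1\|_{L^2(R_1)}^2\|g_2\|_{L^2(R_2)}^2,$$
which is the desired estimate. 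The only slightly delicate point is verifying the $O(1)$-to-one property of $\Phi$, but the Jacobian lower bound and the explicit quadratic form of $\Psi_{\bf A}$ make this elementary, so I do not expect a serious obstacle.
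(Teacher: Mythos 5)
Your argument is essentially the paper's own proof: the map $\Phi(t_1,s_1,t_2,s_2)=\Psi_{\bf A}(t_1,s_1)+\Psi_{\bf A}(t_2,s_2)$ is exactly the change of variables used there, its Jacobian is the transversality form $c_{1,{\bf A}}(t_1-t_2)^2+c_{2,{\bf A}}(t_1-t_2)(s_1-s_2)+c_{3,{\bf A}}(s_1-s_2)^2\ge\nu$ (up to the harmless factor $4$), and the conclusion follows by Plancherel just as in the paper. Your extra care about the $O(1)$ multiplicity of $\Phi$ (handled via the nonvanishing Jacobian and B\'ezout) is a minor refinement of a point the paper passes over silently, so the two proofs coincide in substance.
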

\begin{proof}
We will perform the following change of variables
$$(t_1,s_1,t_2,s_2)\in R_1\times R_2\mapsto (u_1,u_2,u_3,u_4)=$$$$(t_1+t_2,s_1+s_2,A_1(t_1^2+t_2^2)+2A_2(t_1s_1+t_2s_2)+A_3(s_1^2+s_2^2), A_4(t_1^2+t_2^2)+2A_5(t_1s_1+t_2s_2)+A_6(s_1^2+s_2^2))$$
whose Jacobian is
$$c_{1,{\bf A}}(t_1-t_2)^2+c_{2,{\bf A}}(t_1-t_2)(s_1-s_2)+ c_{3,{\bf A}}(s_1-s_2)^2\ge \nu.$$
It follows that
$$|E_{R_1}g_1(x)E_{R_2}g_2(x)|=\widehat{GJ}(x)$$
where $G(u_1,\ldots,u_4)=g_1(t_1,s_1)g_2(t_2,s_2)$ and $|J(u)|\le \nu^{-1}$.

Using Plancherel's identity we get
$$\||E_{R_1}g_1E_{R_2}g_2|^{1/2}\|_{L^4(\R^4)}=\|GJ\|_{L^2(\R^4)}^{1/2}\le$$$$\le \nu^{-\frac14}(\int_{\R^4}|G^2(u)J(u)|du)^{\frac14}=\nu^{-\frac14}(\|g_1\|_{L^2(R_1)}\|g_2\|_{L^2(R_2)})^{1/2}.$$
\end{proof}
\begin{corollary}
\label{fc1}
Let $R_1,R_2\subset [0,1]^2$ be $\nu$-transverse squares. Then for each $4\le p\le \infty$ and  $g_i:R_i\to\C$ we have
\begin{equation}
\label{we3}
\|(\prod_{i=1}^2\sum_{\atop{l(\Delta)=N^{-1/2}}} |E_{\Delta}g_i|^2)^{1/4}\|_{L^{p}(w_{B_N})}\lesssim_\nu N^{-\frac4{p}}(\prod_{i=1}^2\sum_{\atop{l(\Delta)=N^{-1/2}}}\|E_{\Delta}g_i\|_{L^{p/2}(w_{B_N})}^2)^{\frac1{4}}.
\end{equation}
\end{corollary}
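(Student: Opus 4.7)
The plan is to derive the corollary from Theorem \ref{ft3} by establishing the two endpoints $p=4$ and $p=\infty$ and interpolating between them. Writing $F_i := (\sum_{\Delta \subset R_i}|E_\Delta g_i|^2)^{1/2}$, the endpoint $p=4$ follows by raising the LHS to the fourth power and swapping the sum with the integral:
\[
\|(F_1 F_2)^{1/2}\|_{L^4(w_{B_N})}^4 = \sum_{\Delta_1, \Delta_2} \int w_{B_N} |E_{\Delta_1} g_1|^2 |E_{\Delta_2} g_2|^2.
\]
Each pair $(\Delta_1, \Delta_2) \subset R_1 \times R_2$ inherits $\nu$-transversality from its parent squares, so a weighted form of Theorem \ref{ft3}---deduced from the unweighted statement by a routine tessellation/decay argument using the polynomial tail of $w_{B_N}$---yields each summand $\lesssim_\nu \|g_1\|_{L^2(\Delta_1)}^2\|g_2\|_{L^2(\Delta_2)}^2$. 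A local $L^2$ orthogonality computation (expanding $|E_\Delta g|^2$, using that $\widehat{w_{B_N}}$ is essentially $N^4\mathbf{1}_{|\xi|\lesssim N^{-1}}$ and that $\Psi_{\mathbf A}$ is bi-Lipschitz on $\Delta$) gives $\|g_i\|_{L^2(\Delta_i)}^2 \sim N^{-2}\|E_{\Delta_i} g_i\|_{L^2(w_{B_N})}^2$. Substituting, summing, and taking fourth roots delivers the $p=4$ bound with constant $N^{-1}=N^{-4/4}$. The endpoint $p=\infty$ is immediate: pointwise $F_i \le (\sum_\Delta\|E_\Delta g_i\|_\infty^2)^{1/2}$.

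To interpolate, I recast these endpoint inequalities as bounds on the $4$-linear positive form
\[
M(\mathbf{h}_1, \mathbf{h}'_1, \mathbf{h}_2, \mathbf{h}'_2)(x) = \Bigl(\sum_{\Delta \subset R_1} h_{1,\Delta}(x)\, h'_{1,\Delta}(x)\Bigr)\Bigl(\sum_{\Delta \subset R_2} h_{2,\Delta}(x)\, h'_{2,\Delta}(x)\Bigr),
\]
acting on sequences whose components $h_{i,\Delta}$ are Fourier-supported on the plate $\theta_\Delta$; setting $h_{i,\Delta} = E_\Delta g_i$ and $\mathbf{h}'_i = \overline{\mathbf{h}}_i$ recovers $M = F_1^2 F_2^2$. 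A Cauchy--Schwarz enhancement of Step~1 promotes the diagonal bound to the $4$-linear inequality
\[
\|M\|_{L^1(w_{B_N})} \le N^{-4}\prod_{i=1}^2\|\mathbf{h}_i\|_{\ell^2(L^2(w_{B_N}))}\|\mathbf{h}'_i\|_{\ell^2(L^2(w_{B_N}))},
\]
and slot-by-slot Cauchy--Schwarz gives $\|M\|_{L^\infty}\le\prod_i\|\mathbf{h}_i\|_{\ell^2(L^\infty)}\|\mathbf{h}'_i\|_{\ell^2(L^\infty)}$. Stein's multilinear complex interpolation at $\theta = 1-4/p$, using the identifications $[L^1(w_{B_N}),L^\infty]_\theta = L^{p/4}(w_{B_N})$ and $[\ell^2(L^2(w_{B_N})),\ell^2(L^\infty)]_\theta = \ell^2(L^{p/2}(w_{B_N}))$, produces
\[
\|M\|_{L^{p/4}(w_{B_N})} \lesssim_\nu N^{-16/p}\prod_{i=1}^2\|\mathbf{h}_i\|_{\ell^2(L^{p/2}(w_{B_N}))}\|\mathbf{h}'_i\|_{\ell^2(L^{p/2}(w_{B_N}))}.
\]
Specializing $\mathbf{h}'_i = \overline{\mathbf{h}}_i$ and taking fourth roots, using the identity $\|F_1^2 F_2^2\|_{L^{p/4}(w_{B_N})}^{1/4}=\|(F_1F_2)^{1/2}\|_{L^p(w_{B_N})}$, yields the claim with the correct constant $N^{-4/p}$.

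The main obstacle is the multilinear complex interpolation step. The identification $[\ell^2(L^{q_0}),\ell^2(L^{q_1})]_\theta = \ell^2(L^{q_\theta})$ for mixed-norm Banach-valued $L^q$ spaces is standard (Bergh--L\"ofstr\"om) but must be invoked carefully, and Stein's multilinear interpolation for the positive form $M$ is most cleanly realized via an analytic family of linear operators indexed by $z$ in a vertical strip; one could alternatively iterate bilinear Riesz--Thorin. A secondary and routine point is the reduction of the $w_{B_N}$-weighted version of Theorem \ref{ft3} to the stated unweighted form, achieved by a standard partition/polynomial-decay argument.
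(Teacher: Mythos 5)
Your overall architecture (an $L^4$ square--function estimate extracted from Theorem \ref{ft3}, then interpolation with the trivial $L^\infty$ bound) is the same as the paper's, but the step that produces your $p=4$ endpoint has a genuine gap. After discarding the weight and applying the global Theorem \ref{ft3} to each pair $(\Delta_1,\Delta_2)$ (that part is fine: subsets of $\nu$-transverse squares are $\nu$-transverse since \eqref{fe16} is a pointwise condition, and $w_{B_N}\le 1$ makes the ``weighted form'' of Theorem \ref{ft3} trivial), you convert back to the local quantity via the claimed equivalence $\|g_i\|_{L^2(\Delta_i)}^2\sim N^{-2}\|E_{\Delta_i}g_i\|_{L^2(w_{B_N})}^2$. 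Only one direction of this is true in general, namely $\|E_{\Delta}g\|_{L^2(w_{B_N})}^2\lesssim N^{2}\|g\|_{L^2(\Delta)}^2$; the direction you need fails, because $\|g\|_{L^2(\Delta)}$ is a global quantity blind to the spatial localization to $B_N$. Concretely, take $g(u)=e(-x_0\cdot\Psi_{\bf A}(u))$ on $\Delta$ with $|x_0|\ge N^{10}$ and $B_N$ centered at the origin: then $E_\Delta g(x)=E_\Delta 1(x-x_0)$, whose wave packet lives on a dual slab centered at $x_0$, far from $B_N$; since $w_{B_N}$ has only polynomial tails, $\|E_\Delta g\|_{L^2(w_{B_N})}^2$ is smaller than $N^{2}\|g\|_{L^2(\Delta)}^2$ by an arbitrarily large power of $N$. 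Because your chain first inflates the left side up to the global $\|g_i\|_{L^2(\Delta_i)}$ and then tries to deflate it back to the weighted local norm, the $p=4$ endpoint does not follow as written, and the same defect is inherited by your $4$-linear $L^1(w_{B_N})$ bound.

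The standard repair -- and this is what the paper's phrase ``a standard consequence of Theorem \ref{ft3} and Plancherel's identity'' refers to, with details as in \cite{BD3} -- is to localize \emph{before} invoking transversality: pick a Schwartz function $\eta_{B_N}\ge 1$ on $B_N$ with $\widehat{\eta_{B_N}}$ supported in the ball of radius $N^{-1}$, observe that $f_{i,\Delta}:=E_{\Delta}g_i\cdot\eta_{B_N}$ has Fourier support in the plate $\theta_\Delta$ (the $N^{-1}$-neighborhood of $\Psi_{\bf A}(\Delta)$), and apply a neighborhood version of the bilinear estimate to the pairs $f_{1,\Delta_1},f_{2,\Delta_2}$ (obtained from Theorem \ref{ft3} by foliating each plate into translates of the surface). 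The $L^2(\R^4)$ norms that then appear are $\|E_{\Delta}g_i\cdot\eta_{B_N}\|_{2}\lesssim\|E_{\Delta}g_i\|_{L^2(w_{B_N})}$, which is exactly the local right-hand side you want, with the same $N^{-1}$ gain; your expansion of the fourth power then goes through verbatim. Finally, on the interpolation step: your multilinear-form scheme is the usual route, but it must be carried out in the category of functions Fourier-supported in the plates (endpoints stated for general such $f_{i,\Delta}$, then interpolated as in \cite{BD3}); abstract complex interpolation of the unconstrained form does not by itself respect that support restriction, which is precisely the point the paper delegates to \cite{BD3}.
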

\begin{proof}
A standard consequence of Theorem \ref{ft3} and Plancherel's identity is the following local inequality
$$\|(\prod_{i=1}^2 |E_{R_i}g_i|)^{1/2}\|_{L^{4}(w_{B_N})}\lesssim_\nu N^{-1}(\prod_{i=1}^2\sum_{\atop{l(\Delta)=N^{-1/2}}}\|E_{\Delta}g_i\|_{L^{2}(w_{B_N})}^2)^{\frac1{4}}.$$
A  randomization argument further leads to the inequality
$$\|(\prod_{i=1}^2\sum_{\atop{l(\Delta)=N^{-1/2}}} |E_{\Delta}g_i|^2)^{1/4}\|_{L^{4}(w_{B_N})}\lesssim_\nu N^{-1}(\prod_{i=1}^2\sum_{\atop{l(\Delta)=N^{-1/2}}}\|E_{\Delta}g_i\|_{L^{2}(w_{B_N})}^2)^{\frac1{4}}.$$
It now suffices to interpolate this with the trivial inequality
$$\|(\prod_{i=1}^2\sum_{\atop{l(\Delta)=N^{-1/2}}} |E_{\Delta}g_i|^2)^{1/4}\|_{L^{\infty}(w_{B_N})}\le (\prod_{i=1}^2\sum_{\atop{l(\Delta)=N^{-1/2}}}\|E_{\Delta}g_i\|_{L^{\infty}(w_{B_N})}^2)^{\frac1{4}}.$$
We refer the reader to \cite{BD3} for how this type of interpolation is performed.
\end{proof}

\bigskip

For the argument in the following sections, it will be important that transversality is quite generic.
\begin{proposition}
\label{fp1}
For $K=2^m\ge 1$, consider the collection $Col_K$ of the $K^2$ dyadic squares in $[0,1]^2$ with side length $K^{-1}$. For each $R\in Col_K$, there are $O(K)$ squares $R'\in Col_K$ which are $K^{-2}$-transverse to $R$.
\end{proposition}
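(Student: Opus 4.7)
The plan is to reduce the transversality condition---a priori a statement about all pairs of points in $R\times R'$---to a single geometric condition on the center difference $\Delta_0:=(a-a',b-b')$, where $(a,b)$ and $(a',b')$ are the centers of $R$ and $R'$. With the quadratic form
$$Q(x,y):=c_{1,{\bf A}}x^2+c_{2,{\bf A}}xy+c_{3,{\bf A}}y^2,$$
my claim is that transversality fails precisely when $\Delta_0$ lies within $\ell^\infty$-distance $O(K^{-1})$ of the zero locus $Z(Q):=\{Q=0\}\subset\R^2$.

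For Step 1, I observe that $(t_1-t_2,s_1-s_2)$ ranges over the square $\Delta_0+[-K^{-1},K^{-1}]^2$ as $(t_i,s_i)$ ranges over $R_i$. If $\dist(\Delta_0,Z(Q))\le K^{-1}$, this square meets $Z(Q)$, so $Q$ vanishes somewhere on the difference set and the transversality condition is violated. Conversely, if $\dist(\Delta_0,Z(Q))\ge 2K^{-1}$, every point $p$ in the square satisfies $\dist(p,Z(Q))\gtrsim K^{-1}$, and the elementary bound $|Q(p)|\gtrsim \dist(p,Z(Q))^2$, which holds for any real quadratic form, yields $|Q(p)|\gtrsim K^{-2}$ and hence transversality.

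In Step 2, the hypothesis ${\bf A}\in\L$ ensures that at least one $|c_{i,{\bf A}}|\gtrsim C_\Psi^{-1}$, so $Q$ is nonzero. Its zero locus $Z(Q)$ in $\R^2$ is therefore either $\{0\}$ (when $Q$ is definite) or a union of one or two lines through the origin (when $Q$ is degenerate or indefinite). In every case $Z(Q)\cap[-1,1]^2$ is a $1$-dimensional algebraic set of bounded length, so its $K^{-1}$-thickening has area $O(K^{-1})$. Since the centers $(a',b')$ of the $R'\in Col_K$ form a $K^{-1}$-spaced lattice, the number of $\Delta_0$'s---and hence of non-transverse $R'$---inside this thickening is at most $O(K^{-1})\cdot K^2=O(K)$.

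The main obstacle is Step 1, where one must pin down the bad set as an $O(K^{-1})$-thickening of $Z(Q)$ and not merely a coarser sub-level set like $\{|Q|\lesssim K^{-1}\}$. This depends on the uniform quadratic lower bound $|Q(p)|\gtrsim \dist(p,Z(Q))^2$ (with implicit constant governed by $C_\Psi$) and on treating the various structural cases of $Q$ (positive definite, rank-one degenerate, indefinite) in a unified fashion. Once Step 1 is in place, the area estimate and lattice count in Steps 2--3 are essentially immediate.
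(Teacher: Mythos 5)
There is a genuine gap in your Step 1, and it is exactly at the point you flag as the main obstacle. The lower bound $|Q(p)|\gtrsim \dist(p,Z(Q))^2$ does hold for each fixed nonzero quadratic form, but in the definite case the best constant is the \emph{smallest} eigenvalue of the matrix $M_{\bf A}=\bigl[\begin{smallmatrix} c_{1,{\bf A}} & c_{2,{\bf A}}/2\\ c_{2,{\bf A}}/2 & c_{3,{\bf A}}\end{smallmatrix}\bigr]$, and membership in $\L$ only bounds the \emph{largest} eigenvalue from below (it controls $\max_i|c_{i,{\bf A}}|$, not $\det M_{\bf A}$). Concretely, ${\bf A}=(1,\epsilon,-\epsilon,0,1,0)$ lies in $\L$ with constants independent of $\epsilon$ and gives $Q(x,y)=x^2+\epsilon y^2$, so $Z(Q)=\{0\}$; if $\epsilon<K^{-2}$, then for $R,R'$ with center difference $\Delta_0=(0,\tfrac12)$ one has $Q(0,\tfrac12)=\epsilon/4<K^{-2}$, so the pair is \emph{not} $K^{-2}$-transverse even though $\Delta_0$ is at distance $\tfrac12$ from $Z(Q)$. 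Thus your characterization ``non-transverse iff $\Delta_0$ is within $O(K^{-1})$ of $Z(Q)$'' fails: in this regime the bad set is an entire $O(K^{-1})$-wide strip ($\approx K$ squares), not the $O(1)$ squares near the origin your Step 2 would count. The conclusion of the proposition is still true, but your argument does not establish it with constants depending only on $C_\Psi$, and that uniformity over ${\bf A}\in\L$ is needed: the $O(K)$ count feeds into the constant $C_p$ of Proposition \ref{fp2}, which is iterated after the reduction of Section 3 where ${\bf A}$ varies over all of $\L$. (For a fixed strictly definite $Q$ your argument does give $O_{\bf A}(K)$, but with a constant blowing up as $\det M_{\bf A}\to 0$.)

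The repair is essentially the paper's proof: diagonalize $M_{\bf A}$ with eigenvalues $|\lambda_1|\ge|\lambda_2|$, noting $|\lambda_1|\gtrsim_{C_\Psi}1$, and use only the large eigenvalue rather than the distance to $Z(Q)$. If $\lambda_1\lambda_2\ge 0$, then $|Q|\ge|\lambda_1|\beta_1^2$, so the non-transverse $R'$ lie in a single $O(K^{-1})$-wide strip through $R$ in the direction $\v_2$; if $\lambda_1\lambda_2<0$, write $|Q|=|\lambda_1|\,|\beta_1-r\beta_2|\,|\beta_1+r\beta_2|$ with $r=\sqrt{-\lambda_2/\lambda_1}\le 1$, and the non-transverse $R'$ lie in two such strips around the lines of $Z(Q)$. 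Either way one gets $O(K)$ bad squares with constants depending only on $C_\Psi$. Note that in the indefinite and rank-one cases your distance-to-$Z(Q)$ bound is in fact uniform (the constant there is $|\lambda_1|$), so it is precisely the (nearly degenerate) definite case where the sublevel set $\{|Q|\lesssim K^{-2}\}$ must be described by a strip attached to the $\v_1$-direction rather than by a neighborhood of $Z(Q)$; your Steps 2--3 (area of the thickening and the lattice count of centers) are fine once the bad set is described correctly.
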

\begin{proof}
Let $\lambda_1,\lambda_2$ be the eigenvalues of $$M_{\bf A}=\left[ \begin{array}{ccc}
c_{1,{\bf A}} & \frac{c_{2,{\bf A}}}2  \\
\frac{c_{2,{\bf A}}}2 & c_{3,{\bf A}} \end{array} \right]$$
and let $\v_1,\v_2$ be a corresponding orthonormal eigenbasis.
Assume $|\lambda_1|\ge |\lambda_2|$. The condition ${\bf A}\in \L$ guarantees that $|\lambda_1|\gtrsim 1$.

If $(\beta_1,\beta_2)$ are the coordinates of $(t_1-t_2,s_1-s_2)$ with respect to the $(\v_1,\v_2)$ basis, then \eqref{fe16} becomes
\begin{equation}
\label{fe17}
|\beta_1^2\lambda_1+\beta_2^2\lambda_2|\ge K^{-2}.
\end{equation}
If $\lambda_1\lambda_2\ge 0$, then $|\beta_1|\ge |\lambda_1|^{-1/2}K^{-1}$ will automatically force \eqref{fe17}. $R$ and $R'$ will be  $K^{-2}$-transverse as soon as $R'$ is outside the $O(K^{-1})$ wide strip containing $R$ and stretching  in the direction of $\v_2$. There are $O(K)$ squares in this strip.

If $\lambda_1\lambda_2<0 $, then $$\min\left\{\left|\beta_1+\beta_2\sqrt{-\frac{\lambda_2}{\lambda_1}}\;\right|,\left|\beta_1-\beta_2\sqrt{-\frac{\lambda_2}{\lambda_1}}\;\right|\right\}\ge |\lambda_1|^{-1/2}K^{-1}$$ will again force \eqref{fe17}. $R$ and $R'$ will be  $K^{-2}$-transverse as soon as $R'$ is outside the two $O(K^{-1})$ wide strips containing $R$ and stretching  in the directions of $\pm\sqrt{-\frac{\lambda_2}{\lambda_1}}\v_1+\v_2$.
\end{proof}

\section{Linear versus bilinear decoupling}

We will make use of the following ``trivial" decoupling to treat the non transverse contribution in the Bourgain--Guth decomposition.
\begin{lemma}
\label{fl1}
Let $R_1,\ldots,R_K$ be pairwise disjoint squares in $[0,1]^2$ with side length $K^{-1}$. Then for each $2\le p\le \infty$
$$ \|\sum_iE_{R_i}g\|_{L^p(w_{B_K})}\lesssim_p K^{1-\frac2p}(\sum_i\|E_{R_i}g\|_{L^p(w_{B_K})}^p)^{1/p}.$$
\end{lemma}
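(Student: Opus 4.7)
The plan is to establish the bound at the two endpoints $p=2$ and $p=\infty$ and then interpolate. At $p=\infty$ the triangle inequality over the $K$ terms gives immediately
$\|\sum_{i=1}^K E_{R_i}g\|_{L^\infty(w_{B_K})}\le K\max_i\|E_{R_i}g\|_{L^\infty(w_{B_K})}$,
which matches the target constant $K=K^{1-2/\infty}$.

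At $p=2$ I use Plancherel together with almost-orthogonality of the Fourier supports. Because $\Psi_{\bf A}(t,s)=(t,s,\ldots)$ is a graph over its first two coordinates, the images $\Psi_{\bf A}(R_i)\subset\R^4$ of the pairwise disjoint squares $R_i$ are themselves pairwise disjoint; moreover their $CK^{-1}$-neighborhoods have bounded overlap, since projecting to the first two coordinates recovers the $CK^{-1}$-fattenings of the $R_i$, which overlap $O(1)$ times. To handle the weight, I dominate $w_{B_K}\lesssim |v|^2$ by a Schwartz function $v$ with Fourier support in $B(0,CK^{-1})$. Then each $v\cdot E_{R_i}g$ has Fourier support in the $CK^{-1}$-neighborhood of $\Psi_{\bf A}(R_i)$, and Plancherel combined with the bounded-overlap property yields $\|\sum_i E_{R_i}g\|_{L^2(w_{B_K})}^2\lesssim\sum_i\|E_{R_i}g\|_{L^2(w_{B_K})}^2$, i.e.\ the desired bound with constant $1=K^{1-2/2}$.

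For the range $2<p<\infty$, I view the inequality as a bound on the summation operator $T:\{f_i\}_{i=1}^K\mapsto\sum_i f_i$ from $\ell^p(L^p(w_{B_K}))$ to $L^p(w_{B_K})$. The endpoint estimates just established read $\|T\|_{2\to 2}\lesssim 1$ and $\|T\|_{\infty\to\infty}\le K$, and Riesz--Thorin interpolation on the $\ell^p(L^p)$ scale (with $1/p=(1-\theta)/2$, so $\theta=1-2/p$) produces $\|T\|_{p\to p}\lesssim K^{1-2/p}$, completing the proof.

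The only mildly technical point I foresee is the compatibility of the orthogonality and interpolation steps with the polynomial weight $w_{B_K}$ rather than with a compactly supported weight; this is handled by the standard Schwartz majorization/minorization device already invoked for Corollary~\ref{fc1} and in \cite{BD3}, and I expect it to be the sole obstacle. Everything else is routine.
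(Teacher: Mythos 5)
Your endpoint estimates are fine, but the interpolation step as written has a genuine gap. You propose to apply Riesz--Thorin to the summation operator $T:\{f_i\}_{i=1}^K\mapsto\sum_i f_i$ acting on $\ell^p(L^p(w_{B_K}))$, quoting $\|T\|_{\ell^2(L^2)\to L^2}\lesssim 1$ and $\|T\|_{\ell^\infty(L^\infty)\to L^\infty}\le K$. The second bound does hold for arbitrary tuples, but the first does not: for a general element of $\ell^2(L^2)$ (e.g.\ $f_1=\cdots=f_K$) the sharp constant is $K^{1/2}$. Your $O(1)$ bound at $p=2$ uses the almost-disjointness of the Fourier supports of the particular functions $v\,E_{R_i}g$, i.e.\ it is an estimate only on a subspace of tuples, and interpolation theorems do not apply to bounds that hold merely on a subspace (subcouples do not interpolate in general). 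So the inequality $\|T\|_{p\to p}\lesssim K^{1-2/p}$ does not follow from what you have established; indeed you flag the weight as ``the sole obstacle,'' whereas the weight is the harmless part (the measure $w_{B_K}\,dx$ is the same at both endpoints) and the subspace issue is the real one.

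The repair is exactly the device the paper uses: let $T_i$ be a smooth Fourier multiplier equal to $1$ on the support $B_i$ of $\widehat{v\,E_{R_i}g}$ and adapted to a dilate $2B_i$, and interpolate instead the operator $\{f_i\}\mapsto\sum_i T_i(f_i)$, which is defined on \emph{all} tuples. For this operator the $L^2$ bound $O(1)$ holds for arbitrary $f_i$ (orthogonality of the multiplier supports, after splitting into $O(1)$ subfamilies to reduce bounded overlap to disjointness), and the $L^\infty$ bound $O(K)$ holds because each $T_i$ has an $L^1$-normalized kernel; Riesz--Thorin then legitimately gives the constant $K^{1-2/p}$, and applying the conclusion to the specific $f_i=v\,E_{R_i}g$, for which $T_i(f_i)=f_i$, yields the lemma. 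With this modification your argument coincides with the paper's proof; without it, the key intermediate-exponent step is unjustified.
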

\begin{proof}
The key observation is the fact that if $f_1,\ldots,f_K:\R^4\to\C$ are such that $\widehat{f_i}$ is supported on a ball $B_i$ and the dilated balls $(2B_i)_{i=1}^K$ are pairwise disjoint, then 
\begin{equation}
\label{fe36}
\|f_1+\ldots+f_K\|_{L^p(\R^4)}\lesssim_p  K^{1-\frac2p}(\sum_i\|f_i\|_{L^p(\R^4)}^p)^{\frac1p}.
\end{equation}
In fact more is true. If $T_i$ is a smooth Fourier multiplier adapted to $2B_i$ and equal to 1 on $B_i$, then the inequality
$$\|T_1(f_1)+\ldots+T_K(f_K)\|_{L^p(\R^4)}\lesssim_p  K^{1-\frac2p}(\sum_i\|f_i\|_{L^p(\R^4)}^p)^{\frac1p}$$
for arbitrary $f_i\in L^p(\R^4)$
follows by interpolating the immediate $L^2$ and $L^\infty$ estimates. Inequality \eqref{fe36} is the best one can say in general, if no further assumption is made on the Fourier supports of $f_i$. Indeed, if $\widehat{f_i}=1_{B_i}$ with $B_i$ equidistant balls of radius one with collinear centers, then the reverse inequality will hold.

Let now $v_{B_K}$ be a Schwartz function with Fourier support in the $K^{-1}$ neighborhood of the origin in $\R^4$ and which is $\ge 1$ on $B_K$. It suffices to note that the Fourier supports of the  functions $f_i=v_{B_K}E_{R_i}g$ have bounded overlap.
 \end{proof}
\bigskip

For $2\le p<\infty$ and $N\ge 1$,  recall that $D(N,p)$ is the smallest constant such that the decoupling
$$\|E_{[0,1]^2}g\|_{L^p(w_{B_N})}\le D(N,p)(\sum_{l(\Delta)=N^{-1/2}}\|E_{\Delta}g\|_{L^p(w_{B_N})}^p)^{1/p}$$
holds true for all $g$ and all balls $B_N$ or radius $N$. The sum on the right is over a partition of $[0,1]^2$ into dyadic squares $\Delta$ of side length $N^{-1/2}$.

We now introduce a bilinear version of $D(N,p) $. Given also $\nu\le 1$, let $D_{multi}(N,p,\nu)$ be the smallest constant such that the bilinear decoupling
$$\||E_{R_1}g_1E_{R_2}g_2|^{\frac12}\|_{L^p(w_{B_N})}\le D_{multi}(N,p,\nu)(\prod_{i=1}^2\sum_{l(\Delta)=N^{-1/2}}\|E_{\Delta}g_i\|_{L^p(w_{B_N})}^p)^{\frac1{2p}}$$
holds true for all $\nu$-transverse squares $R_1,R_2\subset [0,1]^{2}$ with arbitrary side lengths, all $g_i:R_i\to\C$  and all balls $B_N\in\R^4$ with radius $N$.

H\"older's inequality shows that $D_{multi}(N,p,\nu)\le D(N,p)$. The rest of the section will be devoted to proving that the reverse inequality is also essentially true. This will  follow from a  variant of the Bourgain--Guth induction on scales in \cite{BG}. More precisely, we prove the following result.

\begin{theorem}
\label{ft2}
For each $\nu\le \frac1{10}$ and $p\ge 2$ there exists $C_{\nu}>0$ and $\epsilon(\nu,p)$  with $\lim_{\nu\to 0}\epsilon(\nu,p)=0$ such that for each $N\ge 1$
\begin{equation}
\label{fe31}
D(N,p)\le C_{\nu}N^{\epsilon(\nu,p)}\sup_{1\le M\le N}(\frac{M}{N})^{\frac1p-\frac12}D_{multi}(M,p,\nu).
\end{equation}
\end{theorem}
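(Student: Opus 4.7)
The proof follows a Bourgain--Guth-style induction on scales, in the spirit of \cite{BG}. Fix $\nu\le 1/10$ and let $K=K(\nu)=2^m$ be the largest dyadic integer with $K^{-2}\ge \nu$, so that by Proposition \ref{fp1} the squares in $Col_K$ that fail to be $\nu$-transverse to any fixed $R\in Col_K$ lie inside $O(1)$ strips of width $\sim K^{-1}$ through $R$. The pointwise strategy is standard: at each $x\in\R^4$, either a $\nu$-transverse pair $R_1,R_2\in Col_K$ witnesses $|E_{[0,1]^2}g(x)|$ up to a factor $K^{O(1)}$, or every near-maximizer of $R\mapsto|E_Rg(x)|$ lies in the $O(1)$ strips through the maximizer $R^\ast(x)$. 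I would establish the pointwise inequality
\begin{equation*}
|E_{[0,1]^2}g(x)|\,\lesssim\, K^{O(1)}\!\!\!\!\max_{R_1,R_2\,\nu\text{-transv.}}\!\!\!\!|E_{R_1}g(x)E_{R_2}g(x)|^{1/2}\,+\,\max_{S\in\mathcal{S}}|E_Sg(x)|,
\end{equation*}
where $\mathcal{S}$ is the collection of $O(K)$ distinguished $K^{-1}$-strips (in the $O(1)$ directions given by the eigenvectors of $M_{\bf A}$ from the proof of Proposition \ref{fp1}), and $E_Sg:=\sum_{R\subset S}E_Rg$ denotes the cancelled sum.

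Raising to the $p$-th power, integrating against $w_{B_N}$, and invoking the definition of $D_{multi}(N,p,\nu)$ on each transverse pair, the transverse contribution is bounded by $K^{O(1)}D_{multi}(N,p,\nu)^p\sum_\Delta\|E_\Delta g\|_{L^p(w_{B_N})}^p$. For each strip $S\in\mathcal{S}$, the $\sim K$ constituent $K^{-1}$-squares $R\subset S$ have Fourier supports in disjoint balls of radius $\sim K^{-1}$, so Lemma \ref{fl1} yields $\|E_Sg\|_{L^p(w_{B_N})}^p\lesssim K^{p-2}\sum_{R\subset S}\|E_Rg\|_{L^p(w_{B_N})}^p$. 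Parabolic rescaling $(t,s)\mapsto K(t-t_0,s-s_0)$ paired with its dual anisotropic rescaling $(x_1,x_2,x_3,x_4)\mapsto(Kx_1,Kx_2,K^2x_3,K^2x_4)$ preserves $\Psi_{\bf A}$ (since it is quadratic), maps the $N^{-1/2}$-squares in $R$ to $(N/K^2)^{-1/2}$-squares, and carries $B_N$ to an anisotropic plate covered by $K^{O(1)}$ translates of $B_{N/K^2}$; invoking the definition of $D(N/K^2,p)$ gives $\|E_Rg\|_{L^p(w_{B_N})}\le D(N/K^2,p)(\sum_{\Delta\subset R}\|E_\Delta g\|_{L^p(w_{B_N})}^p)^{1/p}$. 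Summing over $R\subset S$ and $S\in\mathcal{S}$ (using that each $R$ lies in $O(1)$ strips) produces a total strip contribution of $K^{p-2}D(N/K^2,p)^p\sum_\Delta\|E_\Delta g\|_{L^p(w_{B_N})}^p$.

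Assembling the two contributions yields the key recursion $D(N,p)\lesssim K^{O(1)}D_{multi}(N,p,\nu)+K^{1-2/p}D(N/K^2,p)$. Writing $\Lambda(N):=\sup_{1\le M\le N}(M/N)^{1/p-1/2}D_{multi}(M,p,\nu)$ and using $(M/N)=(M/(N/K^2))K^{-2}$ to derive the homogeneity $\Lambda(N/K^2)\le K^{2/p-1}\Lambda(N)$, the recursion transforms into a multiplicative statement for $D(N,p)/\Lambda(N)$. An induction on $N$ with the Ansatz $D(N,p)\le C_\nu N^{\epsilon(\nu,p)}\Lambda(N)$ then closes provided $K^{-2\epsilon(\nu,p)}<1/2$, i.e., $\epsilon(\nu,p)\sim 1/\log K\sim 1/\log(1/\nu)$, which indeed tends to $0$ as $\nu\to 0$; the base case is handled by the trivial bound $D(N,p)\le N^{1-1/p}$ combined with $\Lambda(N)\gtrsim N^{1/2-1/p}$ (absorbing the base case into $C_\nu$).

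\textbf{Main obstacle.} The most delicate step is justifying the pointwise cancelled-sum bound: the cancellations present in $E_Sg=\sum_{R\subset S}E_Rg$ are essential in order to recover the factor $K^{1-2/p}$ (rather than $K$) after Lemma \ref{fl1}, yet in the non-transverse case $|E_{R^\ast(x)}g(x)|$ need not be controlled by $|E_Sg(x)|$ because of potential cancellations within the strip. A careful level-set refinement of the dichotomy is required to absorb the $K^{-1}|E_{R^\ast}|$ lower-order terms into either the transverse or the cancelled-sum terms. A secondary technical hurdle is the anisotropic parabolic rescaling: because the Fourier support of $E_Rg$ is a plate of aspect ratio $K^{-1}\times K^{-1}\times K^{-2}\times K^{-2}$, its dual rescaling carries isotropic balls to plates of dimensions $(N/K)\times(N/K)\times(N/K^2)\times(N/K^2)$, and one must verify that $K^{O(1)}$-many translates of $B_{N/K^2}$ suffice (with the Schwartz-tail weights $w_{B_N}$ transforming compatibly) to reduce to the definition of $D(N/K^2,p)$.
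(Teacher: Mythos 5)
Your proposal is essentially the paper's own argument: a Bourgain--Guth dichotomy at scale $K\sim\nu^{-1/2}$, the sharp $K^{1-\frac2p}$ trivial decoupling of the $O(K)$ non-transverse squares via Proposition \ref{fp1} and Lemma \ref{fl1}, parabolic rescaling to reach $D(N/K^2,p)$, and an iteration in scale; your one-step recursion $D(N,p)\lesssim_p K^{4}D_{multi}(N,p,\nu)+K^{1-\frac2p}D(N/K^2,p)$ is exactly Proposition \ref{fp2} combined with \eqref{fe22}, and closing it by induction with the ansatz $D(N,p)\le C_\nu N^{\epsilon}\Lambda(N)$ is equivalent to the paper's explicit iteration of Proposition \ref{fp3} and summation of the resulting series. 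The one wrinkle is that your displayed pointwise inequality should carry an extra term $\max_R|E_Rg(x)|$: with it, your ``main obstacle'' dissolves, since in the non-transverse case one keeps the single maximizer term separately (as the paper does with $2|E_{R^*}g(B_K)|$), and its $L^p$ contribution is bounded by $\bigl(\sum_R\|E_Rg\|_{L^p(w_{B_N})}^p\bigr)^{1/p}$ with no $K$-loss, so no level-set refinement is needed; the squares inside the strips that happen to be transverse to $R^*$ are likewise absorbed into this term because each satisfies $|E_Rg|<K^{-2}|E_{R^*}g|$ and there are at most $K^2$ of them.
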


The key step in achieving this result is the following inequality.

\begin{proposition}
\label{fp2}
For $2\le p<\infty$, there is a constant $C_p$ which only depends on $p$ so that for each $g$ and $N,K\ge 1$ we have
$$\|E_{[0,1]^2}g\|_{L^p(w_{B_N})}^p\le $$$$ C_p\left(K^{p-2}\sum_{l(R)=\frac1K}\|E_{R}g\|_{L^p(w_{B_N})}^p+K^{4p}D_{multi}(N,p,K^{-2})^p\sum_{l(\Delta)=N^{-1/2}}\|E_{\Delta}g\|_{L^p(w_{B_N})}^p\right).$$
\end{proposition}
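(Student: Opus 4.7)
The plan is a Bourgain--Guth induction on scales, in the spirit of \cite{BG} and \cite{BD3}. I would cover $\R^4$ by a finitely overlapping family $\mathcal{B}$ of balls $B$ of radius $K$. Each $E_Rg$ with $l(R)=K^{-1}$ has Fourier support in a ball of radius $O(K^{-1})$, so after the standard passage from pointwise comparisons to $w_B$-weighted averages, $|E_Rg|$ is essentially constant on each $B$. Let $R^*(B)$ maximize $\|E_Rg\|_{L^\infty(w_B)}$ and let
$$S(B)=\{R:\ \|E_Rg\|_{L^\infty(w_B)}\ge (2K^2)^{-1}\|E_{[0,1]^2}g\|_{L^\infty(w_B)}\};$$
the squares outside $S(B)$ together contribute at most half of $\|E_{[0,1]^2}g\|_{L^\infty(w_B)}$ and can be absorbed. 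Note that $R^*(B)\in S(B)$.

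I then dichotomize on $B$. In the \emph{bilinear case}, there is a $K^{-2}$-transverse pair $(R_1,R_2)\subset S(B)$, and the lower bound $|E_{R_i}g|\gtrsim K^{-2}|E_{[0,1]^2}g|$ on $B$ gives
$$\|E_{[0,1]^2}g\|_{L^p(w_B)}^p\lesssim K^{2p}\bigl\||E_{R_1}g\,E_{R_2}g|^{1/2}\bigr\|_{L^p(w_B)}^p.$$
In the \emph{linear case}, no such pair exists in $S(B)$, so every $R\in S(B)$ is non-transverse to $R^*(B)$, i.e., $S(B)\subset \mathcal{N}(R^*(B))$. By the proof of Proposition \ref{fp1}, this set consists of only $O(K)$ squares of side $K^{-1}$ lying in an $O(K^{-1})$-wide strip, to which Lemma \ref{fl1} applies and yields
$$\|E_{[0,1]^2}g\|_{L^p(w_B)}^p\lesssim K^{p-2}\sum_{R\in\mathcal{N}(R^*(B))}\|E_Rg\|_{L^p(w_B)}^p.$$

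Next I sum over $B\in\mathcal{B}$. By standard weight manipulations (the finite overlap of the $w_B$ together with the polynomial decay of $w_{B_N}$), the linear contribution is dominated by $K^{p-2}\sum_R\|E_Rg\|_{L^p(w_{B_N})}^p$, which is the first term in the proposition. For the bilinear contribution, I majorize by the sum over all $K^{-2}$-transverse pairs $(R_1,R_2)\in Col_K\times Col_K$, invoke the definition of $D_{multi}(N,p,K^{-2})$ on each term, and split the resulting geometric mean by AM--GM. Since each dyadic $\Delta$ of side $N^{-1/2}$ is contained in $O(K^2)$ such pairs, the overcount costs a factor $K^2$; combined with the $K^{2p}$ from the bilinear reduction this yields a prefactor $K^{2p+2}\le K^{4p}$ for $p\ge 2$, matching the second term.

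The main technical obstacle will be making the locally-constant heuristic for $E_Rg$ on scale-$K$ balls rigorous, and carefully tracking the $w_B$-weights through the pointwise comparisons that define $R^*(B)$ and $S(B)$; this is standard in the decoupling literature (cf.\ \cite{BD3}) but requires bookkeeping. It is essential to use Lemma \ref{fl1} (rather than the cruder triangle inequality $|\sum_R E_Rg|\le \sum_R |E_Rg|$) in the linear case, since only the disjoint-Fourier-support decoupling delivers the sharp $K^{p-2}$; a triangle inequality bound would cost an extra factor of $K$ and miss the stated exponent.
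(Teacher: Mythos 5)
Your overall architecture is the paper's: cover by balls of radius $K$, use the locally constant property of each cap $E_Rg$, dichotomize each ball into a transverse (bilinear) case handled by $D_{multi}(N,p,K^{-2})$ and a non-transverse (linear) case handled by Proposition \ref{fp1} plus Lemma \ref{fl1}, then sum over balls; your pair-counting at the end is in fact slightly cleaner than the paper's and keeps you within $K^{4p}$ for all $p\ge 2$. The bilinear case and the final summation are fine.

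The gap is in your linear case, and it comes from the choice of threshold defining $S(B)$. You declare $R$ significant when $\|E_Rg\|_{L^\infty(w_B)}\ge (2K^2)^{-1}\|E_{[0,1]^2}g\|_{L^\infty(w_B)}$ and claim the complement ``can be absorbed.'' The absorption is valid only at the $L^\infty(w_B)$ level: it gives $\|E_{[0,1]^2}g\|_{L^\infty(w_B)}\le 2\|\sum_{R\in S(B)}E_Rg\|_{L^\infty(w_B)}$, but the quantity you must decouple is $\|E_{[0,1]^2}g\|_{L^p(w_B)}$, and neither $E_{[0,1]^2}g$ nor the strip sum $\sum_{R\in S(B)}E_Rg$ is locally constant at scale $K$ (their Fourier supports have diameter $O(1)$, not $O(K^{-1})$), so you cannot pass from the $L^\infty(w_B)$ comparison to the needed $L^p(w_B)$ comparison. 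If instead you split $E_{[0,1]^2}g$ into the strip part and the discarded part, the discarded caps are only known to be small relative to $\|E_{[0,1]^2}g\|_{L^\infty(w_B)}$; in the linear case this quantity is only $\lesssim K\max_R\|E_Rg\|_{L^\infty(w_B)}$, so the discarded part contributes $\lesssim K\|E_{R^*}g\|_{L^p(w_B)}$, i.e.\ a factor $K^{p}$ rather than $K^{p-2}$ after raising to the $p$-th power --- exactly the kind of loss you yourself note is fatal for the subsequent iteration (the sharp $K^{p-2}$ is what produces the $N^{\frac12-\frac1p}$ factor in Theorem \ref{ft2}). The paper avoids this by thresholding against the largest single cap: $R^*$ maximizes $|E_Rg(B)|$, and in the linear case every $R$ that is $K^{-2}$-transverse to $R^*$ satisfies $|E_Rg(B)|<K^{-2}|E_{R^*}g(B)|$, so the total of all transverse caps is pointwise $\lesssim |E_{R^*}g|$ on $B$ (a single cap, genuinely locally constant at scale $K$), hence contributes only $\|E_{R^*}g\|_{L^p(w_B)}$, while the $O(K)$ non-transverse caps are decoupled by Lemma \ref{fl1} with the sharp $K^{1-\frac2p}$. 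Replace your threshold $(2K^2)^{-1}\|E_{[0,1]^2}g\|_{L^\infty(w_B)}$ by $K^{-2}\max_R\|E_Rg\|_{L^\infty(w_B)}$ (adjusting the bilinear case accordingly, which still closes with a harmless extra power of $K$) and your argument matches the paper's and goes through.
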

\bigskip

The exponent $4p$ in $K^{4p}$ is not important and could easily be improved, but the exponent $p-2$ in $K^{p-2}$ is sharp and will play a critical role in the rest of the argument.
\bigskip

\begin{proof}
Following the standard formalism from \cite{BG}, we may assume that $|E_{R}g(x)|$ is essentially constant on each ball $B_K$ of radius $K$, and will we denote by $|E_{R}g(B_K)|$ this value. Write
$$E_{[0,1]^2}g(B_K)=\sum_{l(R)=\frac1K}E_{R}g(B_K).$$
Fix $B_K$. Let $R^*$ be a square which maximizes the value of $|E_{R}g(B_K)|$. We distinguish two cases.

First, if there is some $R^{**}$ which is $K^{-2}$-transverse to $R^*$ and such that
$|E_{R^{**}}g(B_K)|\ge K^{-2}|E_{R^*}g(B_K)|$, then
$$|E_{[0,1]^2}g(B_K)|\le K^3(|E_{R^{*}}g(B_K)E_{R^{**}}g(B_K)|)^{1/2}.$$

Otherwise, $|E_{R}g(B_K)|<K^{-2}|E_{R^*}g(B_K)|$ whenever $R$ is $K^{-2}$-transverse to $R^*$. In this case we can write
$$|E_{[0,1]^2}g(B_K)|\le 2|E_{R^*}g(B_K)|+|\sum_{R\text{ not } K^{-2}-\text{transverse to }R^* }E_{R}g(B_K)|.$$
Using Lemma \ref{fl1} and Proposition \ref{fp1} we get
$$\|E_{[0,1]^2}g\|_{L^p(w_{B_K})}\lesssim_p \|E_{R^*}g\|_{L^p(w_{B_K})}+ K^{1-\frac2{p}}(\sum_{R\text{ not }K^{-2}-\text{transverse to }R^*}\|E_{R}g\|_{L^p(w_{B_K})}^p)^{1/p}\le $$
$$\lesssim  K^{1-\frac2{p}}(\sum_{\text{all }R}\|E_{R}g\|_{L^p(w_{B_K})}^p)^{1/p}.$$
To summarize, in either case we can write
$$\|E_{[0,1]^2}g\|_{L^p(w_{B_K})}\lesssim_p $$$$ K^3\max_{R_1,R_2:\;K^{-2}-\text{transverse}}\|(|E_{R_1}gE_{R_2}g|)^{1/2}\|_{L^p(w_{B_K})}+ K^{1-\frac2{p}}(\sum_{\text{all }R}\|E_{R}g\|_{L^p(w_{B_K})}^p)^{1/p}\le$$
$$\le K^3(\sum_{R_1,R_2:\;K^{-2}-\text{transverse}}\|(|E_{R_1}gE_{R_2}g|)^{1/2}\|_{L^p(w_{B_K})}^p)^{1/p}+ K^{1-\frac2{p}}(\sum_{\text{all }R}\|E_{R}g\|_{L^p(w_{B_K})}^p)^{1/p}.$$
Raising to the power $p$  and summing over $B_K\subset B_N$ leads to the desired conclusion.
\end{proof}
\bigskip

It is worth mentioning that in general, one can not do better than the trivial decoupling to estimate the contribution from the non transverse terms. This is illustrated by the example
$$\Psi(t,s)=(t,s,t^2,ts).$$
The squares $R_1,\ldots,R_K\subset [0,1]^2$ containing the line segment $\{0\}\times [0,1]$ will be pairwise non transverse. It is easy to see that
$$\|\sum_iE_{R_i}1\|_{L^p(w_{B_K})}\sim_p K^{1-\frac2p}(\sum_i\|E_{R_i}1\|_{L^p(w_{B_K})}^p)^{1/p}.$$
This is due to the fact that the sets $\Psi(R_i)$ intersect the line $$\{(0,s,0,0):s\in\R\}.$$
The lack of curvature prevents anything better than the trivial decoupling to hold.
\bigskip

\bigskip

Using a form of parabolic rescaling,  the result in Proposition \ref{fp2} leads to the following general result.
\begin{proposition}
\label{fp3}
Let $R\subset[0,1]^2$ be a square with side length $\delta$. For $2\le p<\infty$, there is a constant $C_p$ which only depends on $p$ so that for each $g$, $K\ge 1$ and $N>\delta^{-2}$ we have
$$\|E_{R}g\|_{L^p(w_{B_N})}^p\le $$$$ C_p\left(K^{p-2}\sum_{R'\subset R\atop{l(R')=\frac\delta{K}}}\|E_{R'}g\|_{L^p(w_{B_N})}^p+K^{4p}D_{multi}(N\delta^{2},p,K^{-2})^p\sum_{\Delta\subset R\atop{l(\Delta)=N^{-1/2}}}\|E_{\Delta}g\|_{L^p(w_{B_N})}^p\right).$$
\end{proposition}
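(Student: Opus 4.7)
The plan is to deduce Proposition \ref{fp3} from Proposition \ref{fp2} by parabolic rescaling, exploiting the fact that $\Psi_{\mathbf A}$ is a quadratic polynomial with no lower order corrections, so rescaling a subsquare back to the unit square produces a surface of \emph{exactly} the same form (same $\mathbf A$), and Proposition \ref{fp2} can be applied verbatim at a smaller scale.

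Concretely, if $R=[t_0,t_0+\delta]\times[s_0,s_0+\delta]$, I would write $(t,s)=(t_0+\delta t',s_0+\delta s')$, define $\tilde g(t',s')=g(t_0+\delta t',s_0+\delta s')$, and expand $\Psi_{\mathbf A}(t_0+\delta t',s_0+\delta s')$ in powers of $\delta$. Grouping the constant, linear-in-$\delta$ and quadratic-in-$\delta^2$ contributions, one produces an invertible linear map $L:\R^4\to\R^4$ of block-triangular form with singular values comparable to $\delta,\delta,\delta^2,\delta^2$ (hence $|\det L|\sim\delta^6$) satisfying the key identity
$$E_{R}g(x)=\delta^2\,e(x\cdot\Psi_{\mathbf A}(t_0,s_0))\,E_{[0,1]^2}\tilde g\bigl(L(x)\bigr).$$
Under this rescaling, subsquares $R'\subset R$ of side $\delta/K$ correspond to subsquares of $[0,1]^2$ of side $1/K$, and squares $\Delta\subset R$ of side $N^{-1/2}$ correspond to subsquares of $[0,1]^2$ of side $(N\delta^2)^{-1/2}=M^{-1/2}$, where $M:=N\delta^2\ge 1$ by the assumption $N>\delta^{-2}$.

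The next step is to change variables $y=L(x)$ in the weighted $L^p$ norm. The pushforward of $w_{B_N}$ is (up to the Jacobian factor $\delta^{-6}$) a weight $W$ adapted to the ellipsoidal region $L(B_N)$, whose principal semi-axes are of order $\delta N,\delta N,\delta^2 N,\delta^2 N$. I would cover $L(B_N)$ by a collection $\{B_M^{(j)}\}$ of balls of radius $M=N\delta^2$ (the number of balls needed is $\lesssim\delta^{-2}$ but this count will not appear in the final bound because the resulting sum of weights $\sum_j w_{B_M^{(j)}}$ is pointwise dominated by a constant multiple of $W$ with appropriate polynomial decay). Applying Proposition \ref{fp2} at scale $M$ to $E_{[0,1]^2}\tilde g$ on each $B_M^{(j)}$ and raising to the $p$-th power gives, on each ball,
$$\|E_{[0,1]^2}\tilde g\|_{L^p(w_{B_M^{(j)}})}^p\le C_p\Bigl(K^{p-2}\sum_{l(R'')=1/K}\|E_{R''}\tilde g\|_{L^p(w_{B_M^{(j)}})}^p+K^{4p}D_{multi}(M,p,K^{-2})^p\!\!\!\sum_{l(\Delta'')=M^{-1/2}}\!\!\|E_{\Delta''}\tilde g\|_{L^p(w_{B_M^{(j)}})}^p\Bigr).$$
Summing over the cover and absorbing the tails, one gets the same inequality with $w_{B_M^{(j)}}$ replaced by the anisotropic weight $W$.

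Finally I would undo the rescaling in each of the three resulting weighted norms: the identity of Step 1 (applied to the subsquares $R''\leftrightarrow R'$ and $\Delta''\leftrightarrow\Delta$) lets one convert $\|E_{R''}\tilde g\|_{L^p(W)}$ back to $\|E_{R'}g\|_{L^p(w_{B_N})}$ (and similarly for $\Delta''\leftrightarrow\Delta$), with the factors of $\delta^2$ on the extension side exactly canceling the Jacobian $\delta^{-6}$ when raised to the appropriate power, so that the inequality closes without any loss in $\delta$. The main obstacle is a careful bookkeeping of the weight transfer: the pushforward of $w_{B_N}$ under $L$ is anisotropic, not a weight adapted to $B_M$, and one must verify that covering $L(B_N)$ by balls of radius $M$, applying Proposition \ref{fp2} on each, and resumming indeed reconstructs the correct weighted quantities (the Schwartz tails of $w_{B_N}$ guarantee this reassembly, but it is the delicate point). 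All the remaining steps are routine changes of variable matching the exponents on $K$ and $N\delta^2$ exactly as stated.
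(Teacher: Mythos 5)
Your proposal is correct and follows essentially the same route as the paper's proof: the affine rescaling of $R$ to $[0,1]^2$ (which, since $\Psi_{\bf A}$ is purely quadratic, induces a shear plus anisotropic dilation $\delta,\delta,\delta^2,\delta^2$ in $x$), covering the image of $B_N$ by finitely overlapping balls of radius $N\delta^2$, applying Proposition \ref{fp2} on each, and rescaling back with the $\delta$-powers cancelling exactly. The weight-transfer bookkeeping you flag as delicate is precisely what the paper handles with its ``appropriate weight $w_{C_N}$'' and the $O(1)$-overlap cover.
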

\begin{proof}
Assume $R=[a,a+\delta]\times [b,b+\delta]$. The affine change of variables
$$(t,s)\in R\mapsto(t',s')=\eta(t,s)=(\frac{t-a}{\delta},\frac{s-b}{\delta})\in[0,1]^2$$ shows that
$$|E_Rg(x)|=\delta^2|E_{[0,1]^2}g^{a,b}(\bar{x})|,$$
$$|E_{R'}g(x)|=\delta^2|E_{R''}g^{a,b}(\bar{x})|,$$
where
$R''=\eta(R')$ is a square with side length $\frac1K$,
$$g^{a,b}(t',s')=g(\delta t'+a,\delta s'+b),$$
$$\bar{x}_1=\delta(x_1+x_3(2aA_1+2bA_2)+x_4(2aA_4+2bA_5))$$
$$\bar{x}_2=\delta(x_2+x_3(2bA_3+2aA_2)+x_4(2bA_6+2aA_5))$$
$$\bar{x}_3=\delta^2x_3,\;\;\bar{x}_4=\delta^2x_4.$$
Note that $\bar{x}$ is the image of $x$ under a shear transformation. Call $C_N$ the image of the ball $B_N$ in $\R^4$ under this transformation. Cover $C_N$ with a family $\F$ of balls $B_{\delta^2N}$ with $O(1)$ overlap. Write
$$\|E_{R}g\|_{L^p(w_{B_N})}=\delta^{2-\frac6p}\|E_{[0,1]^2}g^{a,b}\|_{L^p(w_{C_N})}$$
for an appropriate weight $w_{C_N}$. The right hand side is bounded by
$$\delta^{2-\frac6p}(\sum_{B_{\delta^2N}\in\F}\|E_{[0,1]^2}g^{a,b}\|_{L^p(w_{B_{\delta^2N}})}^p)^{1/p}.$$
Apply Proposition \ref{fp2} to each of the terms $\|E_{[0,1]^2}g^{a,b}\|_{L^p(w_{B_{\delta^2N}})}$ and then rescale back.
\end{proof}

\bigskip

We are now in position to prove Theorem \ref{ft2}. By iterating Proposition \ref{fp3} $n$ times we get
$$\|E_{[0,1]^2}g\|_{L^p(w_{B_N})}^p\le (C_pK^{p-2})^n\sum_{l(R)=\frac1{K^n}}\|E_{R}g\|_{L^p(w_{B_N})}^p+$$$$+C_pK^{4p}\sum_{l(\Delta)=N^{-1/2}}\|E_{\Delta}g\|_{L^p(w_{B_N})}^p\sum_{j=0}^{n-1}(C_pK^{p-2})^{j}D_{multi}(NK^{-2j},p,K^{-2})^p.$$
Given $\nu\le \frac1{10}$,  we apply this with $K^{-2}=\nu$ and $n$ such that $K^n= N^{\frac12}$.  We get
$$\|E_{[0,1]^2}g\|_{L^p(w_{B_N})}\le $$$$ N^{\frac1{4p}\log_{\nu^{-1}}C_p}N^{^{\frac12-\frac1p}}(1+\nu^{-2}\sum_{j=0}^{n-1}(N\nu^{j})^{\frac1p-\frac12}D_{multi}(N\nu^{j},p,\nu))(\sum_{l(\Delta)={N^{-1/2}}}\|E_{\Delta}g\|_{L^p(w_{B_N})}^p)^{1/p}
\le$$
$$ N^{\frac1{4p}\log_{\nu^{-1}}C_p}N^{^{\frac12-\frac1p}}(1+\frac{\log_{\nu^{-1}}N}{4\nu^2}\max_{1\le M\le N}M^{\frac1p-\frac12}D_{multi}(M,p,\nu))(\sum_{l(\Delta)={N^{-1/2}}}\|E_{\Delta}g\|_{L^p(w_{B_N})}^p)^{1/p}.$$
The proof of Theorem \ref{ft2} is now complete.

\bigskip

\section{The final argument}
In this section we finish the proof of Theorem \ref{ft1}, by showing that
$$D(N,6)\lesssim_\epsilon N^{\frac13+\epsilon}.$$
For $p\ge 4$ define $\kappa_p$ such that
$$\frac2p=\frac{1-\kappa_p}{2}+\frac{\kappa_p}{p},$$
in other words, $$\kappa_p=\frac{p-4}{p-2}.$$

\begin{proposition}
Let $R_1,R_2$ be $\nu$-transverse squares in $[0,1]^2$ with arbitrary side lengths.
We have that for each radius $R\ge N$, $p\ge 4$ and $g_i:R_i\to \C$
$$\|(\prod_{i=1}^2\sum_{\atop{l(\tau)=N^{-1/4}}}|E_{\tau}g_i|^2)^{\frac1{4}}\|_{L^{p}(w_{B_R})}\lesssim_{\nu,p}$$
$$
\lesssim_{\nu,p}\|(\prod_{i=1}^2\sum_{\atop{l(\Delta)=N^{-1/2}}}|E_{\Delta}g_i|^2)^{\frac1{4}}\|_{L^{p}(w_{B_R})}^{1-\kappa_p}(\prod_{i=1}^2\sum_{\atop{l(\tau)=N^{-1/4}}}\|E_{\tau}g_i\|_{L^{p}(w_{B_R})}^2)^{\frac{\kappa_p}{4}}.
$$
\end{proposition}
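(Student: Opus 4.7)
My plan is to prove the inequality by combining the bilinear $L^p$ estimate of Corollary \ref{fc1}, applied at the rescaled scale $M=N^{1/2}$ (so that $\tau$ plays the role of the critical scale), with the log-convexity identity $\frac{2}{p}=\frac{1-\kappa_p}{2}+\frac{\kappa_p}{p}$. The two endpoints that anchor the argument are $p=4$ (where $\kappa_p=0$, reducing to $\int A_1A_2\lesssim_\nu\int B_1B_2$) and $p=\infty$ (where $\kappa_p=1$, immediate from the pointwise bound $A_i(x)\le\sum_\tau\|E_\tau g_i\|_{L^\infty(w_{B_R})}^2$).

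After extending Corollary \ref{fc1} from $B_{N^{1/2}}$ to $B_R$ by a standard covering argument, one obtains
\[
\|(A_1A_2)^{1/4}\|_{L^p(w_{B_R})}\lesssim_\nu N^{-2/p}\Big(\prod_{i=1}^2\sum_\tau\|E_\tau g_i\|_{L^{p/2}(w_{B_R})}^2\Big)^{1/4},
\]
writing $A_i=\sum_\tau|E_\tau g_i|^2$. The log-convexity of $L^s$-norms gives $\|E_\tau g_i\|_{L^{p/2}}\le\|E_\tau g_i\|_{L^2}^{1-\kappa_p}\|E_\tau g_i\|_{L^p}^{\kappa_p}$; squaring and applying H\"older to the $\tau$-sum with conjugate exponents $\frac{1}{1-\kappa_p}$ and $\frac{1}{\kappa_p}$ yields
\[
\sum_\tau\|E_\tau g_i\|_{L^{p/2}}^2\le\Big(\sum_\tau\|E_\tau g_i\|_{L^2}^2\Big)^{1-\kappa_p}\Big(\sum_\tau\|E_\tau g_i\|_{L^p}^2\Big)^{\kappa_p}.
\]
The second factor is exactly $C_i^{\kappa_p}$ with $C_i=\sum_\tau\|E_\tau g_i\|_{L^p(w_{B_R})}^2$, which provides the $\kappa_p$-power piece on the right side of the proposition.

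For the first factor, I would use $L^2$-orthogonality at the $\Delta$-scale to write $\sum_\tau\|E_\tau g_i\|_{L^2}^2\sim\sum_\Delta\|E_\Delta g_i\|_{L^2}^2$, and then invoke Corollary \ref{fc1} at scale $M=N$ to convert this quantity into $\|(B_1B_2)^{1/4}\|_{L^p(w_{B_R})}^{1-\kappa_p}$, with the $N^{-2/p}$ factor above cancelling against the compensating $N$-powers produced by the $\Delta$-scale structure. The main obstacle is precisely this final bookkeeping step: verifying that the $N$-exponents arising from the two rescaled applications of Corollary \ref{fc1} (one at $M=N^{1/2}$, one at $M=N$) cancel exactly. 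This delicate cancellation is the algebraic content of the specific form $\kappa_p=(p-4)/(p-2)$, and is characteristic of the multi-scale interpolation in decoupling arguments.
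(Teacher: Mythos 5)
Your opening steps coincide with the paper's: Corollary \ref{fc1} applied at radius $N^{1/2}$ (so that the $\tau$-scale $N^{-1/4}$ is the natural decoupling scale), the splitting $\|E_\tau g_i\|_{L^{p/2}}\le\|E_\tau g_i\|_{L^2}^{1-\kappa_p}\|E_\tau g_i\|_{L^p}^{\kappa_p}$ followed by H\"older in $\tau$, and $L^2$ almost orthogonality to pass from $\tau$ to $\Delta$. The gap is your final conversion step, and it is not mere bookkeeping. First, Corollary \ref{fc1} cannot be ``invoked at scale $M=N$'' to turn $(\prod_i\sum_\Delta\|E_\Delta g_i\|_{L^2(w_{B_R})}^2)^{1/4}$ into the square-function norm $\|(B_1B_2)^{1/4}\|_{L^p(w_{B_R})}$: that corollary bounds the $L^p$ norm of the bilinear square function \emph{from above} by sums of cap norms, i.e.\ it points in the opposite direction to what you need, since here the square function must appear as an upper bound. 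Second, the global inequality your cancellation would require, namely
$$\Big(\prod_{i=1}^2\sum_{l(\Delta)=N^{-1/2}}\|E_\Delta g_i\|_{L^2(w_{B_R})}^2\Big)^{1/4}\lesssim N^{1-\frac2p}\,\Big\|\Big(\prod_{i=1}^2\sum_{l(\Delta)=N^{-1/2}}|E_\Delta g_i|^2\Big)^{1/4}\Big\|_{L^p(w_{B_R})},$$
(the exponent is forced, since $(1-\frac2p)(1-\kappa_p)=\frac2p$ must cancel your $N^{-2/p}$) is false: take $R=N$ and $g_1,g_2$ supported on single caps, so that each $E_{\Delta_i}g_i$ is essentially one wave packet of height $N^{-1}$ on a slab of dimensions $N^{1/2}\times N^{1/2}\times N\times N$. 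Then the left side is $\sim N^{1/2}$ while the right side is $O(N^{1/p})$, a polynomial loss for $p\ge4$. The reverse H\"older step with the correct constant $N^{1-\frac2p}=|B|^{\frac12-\frac1p}$ is only available on balls $B$ of radius $N^{1/2}$, where each $|E_\Delta g_i|$ is essentially constant; this locally constant reverse inequality (\eqref{we8} in the paper) is precisely the ingredient missing from your outline, and it is destroyed once you globalize to $B_R$ at the start.

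The paper therefore runs the entire chain locally: on each ball $B$ of radius $N^{1/2}$ it combines \eqref{fe20}, \eqref{we7}, the $L^2$ orthogonality, and \eqref{we8} to get
$$\|(\textstyle\prod_i\sum_\tau|E_\tau g_i|^2)^{1/4}\|_{L^p(w_B)}\lesssim_\nu\|(\textstyle\prod_i\sum_\Delta|E_\Delta g_i|^2)^{1/4}\|_{L^p(w_B)}^{1-\kappa_p}\,(\textstyle\prod_i\sum_\tau\|E_\tau g_i\|_{L^p(w_B)}^2)^{\kappa_p/4},$$
with the powers of $N$ cancelling exactly as you anticipated, and only afterwards sums over a finitely overlapping family of balls $B\subset B_R$, using H\"older with exponents $\frac1{1-\kappa_p}$ and $\frac1{\kappa_p}$ over the ball family together with Cauchy--Schwarz and Minkowski (valid since $p\ge4$) to reassemble the $\tau$-factor into $(\prod_i\sum_\tau\|E_\tau g_i\|_{L^p(w_{B_R})}^2)^{\kappa_p/4}$. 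If you keep your splitting and the $L^2\to\Delta$-scale step at radius $N^{1/2}$, replace the appeal to Corollary \ref{fc1} at scale $N$ by the locally constant reverse H\"older argument, and add this final summation over balls, your outline becomes the paper's proof.
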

\begin{proof}
Let  $B$ be an arbitrary ball of radius $N^{1/2}$. We start by recalling that \eqref{we3} on $B$ gives
\begin{equation}
\label{fe20}
\|(\prod_{i=1}^2\sum_{\atop{l(\tau)=N^{-1/4}}} |E_{\tau}g_i|^2)^{1/4}\|_{L^{p}(w_{B})}\lesssim_\nu N^{-\frac2{p}}(\prod_{i=1}^2\sum_{\atop{l(\tau)=N^{-1/4}}}\|E_{\tau}g_i\|_{L^{p/2}(w_{B})}^2)^{\frac1{4}}.
\end{equation}
Write using H\"older's inequality
\begin{equation}
\label{we7}
(\sum_{\atop{l(\tau)=N^{-1/4}}}\|E_{\tau}g_i\|_{L^{p/2}(w_{B})}^2)^{\frac1{2}}\le (\sum_{\atop{l(\tau)=N^{-1/4}}}\|E_{\tau}g_i\|_{L^{2}(w_{B})}^2)^{\frac{1-\kappa_p}{2}}(\sum_{\atop{l(\tau)=N^{-1/4}}}\|E_{\tau}g_i\|_{L^{p}(w_{B})}^2)^{\frac{\kappa_p}{2}}.
\end{equation}
The next key element in our argument is the almost orthogonality specific to $L^2$, which will allow us to pass from scale $N^{-1/4}$ to scale $N^{-1/2}$. Indeed, since $(E_\Delta g_i)w_{B}$ are almost orthogonal for $l(\Delta)=N^{-1/2}$, we have
$$(\sum_{\atop{l(\tau)=N^{-1/4}}}\|E_{\tau}g_i\|_{L^{2}(w_{B})}^2)^{1/2}\lesssim (\sum_{\atop{l(\Delta)=N^{-1/2}}}\|E_{\Delta}g_i\|_{L^{2}(w_{B})}^2)^{1/2}.$$
We can now rely on the fact that $|E_{\Delta}g_i|$ is essentially constant on balls $B'$ of radius $N^{1/2}$ to argue that
$$(\sum_{\atop{l(\Delta)=N^{-1/2}}}\|E_{\Delta}g_i\|_{L^{2}(B')}^2)^{\frac1{2}}\sim |B'|^{1/2}(\sum_{\atop{l(\Delta)=N^{-1/2}}}|E_{\Delta}g_i|^2)^{\frac1{2}}|_{B'}$$
and thus
\begin{equation}
\label{we8}
(\prod_{i=1}^2\sum_{\atop{l(\Delta)=N^{-1/2}}}\|E_{\Delta}g_i\|_{L^{2}(w_{B})}^2)^{\frac1{4}}\lesssim |B|^{\frac12-\frac1p}\|(\prod_{i=1}^2\sum_{\atop{l(\Delta)=N^{-1/2}}}|E_{\Delta}g_i|^2)^{\frac1{4}}\|_{L^{p}(w_{B})}.
\end{equation}
Combining \eqref{fe20}, \eqref{we7} and \eqref{we8} we get
$$
\|(\prod_{i=1}^2\sum_{\atop{l(\tau')=N^{-1/4}}}|E_{\tau}g_i|^2)^{\frac1{4}}\|_{L^{p}(w_{B})}\lesssim_\nu \|(\prod_{i=1}^2\sum_{\atop{l(\Delta)=N^{-1/2}}}|E_{\Delta}g_i|^2)^{\frac1{4}}\|_{L^{p}(w_{B})}^{1-\kappa_p}(\prod_{i=1}^2\sum_{\atop{l(\tau)=N^{-1/4}}}\|E_{\tau}g_i\|_{L^{p}(w_{B})}^2)^{\frac{\kappa_p}{4}}.
$$
Summing this up over a finitely overlapping family of balls $B\subset B_R$ we get the desired inequality.
\end{proof}
\medskip

We will iterate the result of this proposition in the following form, a consequence of the Cauchy--Schwartz inequality
$$\|(\prod_{i=1}^2\sum_{\atop{l(\tau)=N^{-1/4}}}|E_{\tau}g_i|^2)^{\frac1{4}}\|_{L^{p}(w_{B_R})}\le$$
\begin{equation}
\label{fe21}
\le C_{p,\nu}N^{\frac{\kappa_p}{2}(\frac12-\frac1p)}\|(\prod_{i=1}^2\sum_{\atop{l(\Delta)=N^{-1/2}}}|E_{\Delta}g_i|^2)^{\frac1{4}}\|_{L^{p}(w_{B_R})}^{1-\kappa_p}(\prod_{i=1}^2\sum_{\atop{l(\tau)=N^{-1/4}}}\|E_{\tau}g_i\|_{L^{p}(w_{B_R})}^p)^{\frac{\kappa_p}{2p}}.
\end{equation}

\bigskip

We will also need the following immediate consequence of the Cauchy--Schwartz inequality. While the exponent $2^{-s}$ in $N^{2^{-s}}$ can be improved by using the bilinear Theorem \ref{ft3}, the following trivial estimate will suffice for our purposes.
\begin{lemma}
\label{wlem0081}Consider two rectangles $R_1,R_2\subset [0,1]^2$ with arbitrary side lengths. Assume $g_i$ is supported on $R_i$.
Then for $1\le p\le\infty$ and $s\ge 2$
$$\|(\prod_{i=1}^2|E_{R_i}g_i|)^{1/2}\|_{L^{p}({w_{B_N}})}\le N^{2^{-s}}\|(\prod_{i=1}^2\sum_{\atop{l(\tau_s)=N^{-2^{-s}}}}|E_{\tau_s}g_i|^2)^{\frac1{4}}\|_{L^{p}(w_{B_N})}.$$
\end{lemma}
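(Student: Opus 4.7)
The plan is a pointwise Cauchy--Schwarz at the scale $N^{-2^{-s}}$. Since $R_i\subset[0,1]^2$ and since the hypothesis $s\ge 2$ ensures $N^{2^{-s}}\ge 1$, the number $K_i$ of squares $\tau_s$ of side length $N^{-2^{-s}}$ in the canonical partition of $[0,1]^2$ that meet $R_i$ satisfies $K_i\lesssim N^{2\cdot 2^{-s}}$. Writing $E_{R_i}g_i(x)$ as the sum of $E_{\tau_s\cap R_i}g_i(x)$ over those $\tau_s$ that intersect $R_i$ and applying the elementary inequality $|\sum_{j=1}^{n}a_j|\le n^{1/2}(\sum_j|a_j|^2)^{1/2}$ pointwise in $x$, I would obtain $|E_{R_i}g_i(x)|\lesssim N^{2^{-s}}(\sum_{\tau_s}|E_{\tau_s}g_i(x)|^2)^{1/2}$ for each $i=1,2$, where as usual only the $\tau_s$ meeting $R_i$ actually contribute.

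Next I would multiply these two pointwise bounds, take a square root, and arrive at the pointwise inequality $(\prod_{i=1}^2|E_{R_i}g_i|)^{1/2}\lesssim N^{2^{-s}}(\prod_{i=1}^2\sum_{\tau_s}|E_{\tau_s}g_i|^2)^{1/4}$. Taking the $L^p(w_{B_N})$ norm of both sides then finishes the argument for any $1\le p\le\infty$; the absolute constant hidden in $\lesssim$ is harmless and can be absorbed into the $N^{2^{-s}}$ factor (this is also why one needs $s\ge 2$, so that $N^{2^{-s}}\gtrsim 1$ and the ceiling in the count $K_i\le \lceil N^{2^{-s}}\rceil^2$ is negligible).

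There is no real obstacle here: the whole lemma is nothing more than bookkeeping around Cauchy--Schwarz, which is precisely why the authors describe the estimate as ``trivial.'' As they remark, the exponent $2^{-s}$ is wasteful and could be sharpened by invoking the bilinear $L^4$ estimate of Theorem \ref{ft3} in place of Cauchy--Schwarz; but the crude bound is all that is needed for the subsequent iteration scheme.
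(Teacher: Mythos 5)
Your argument is correct and is exactly the Cauchy--Schwarz bookkeeping the paper has in mind when it calls the lemma an ``immediate consequence of the Cauchy--Schwartz inequality'': decompose $E_{R_i}g_i$ over the at most $N^{2\cdot 2^{-s}}$ squares $\tau_s$ meeting $R_i$, apply $|\sum_j a_j|\le n^{1/2}(\sum_j|a_j|^2)^{1/2}$ pointwise, multiply, take square roots, and then $L^p(w_{B_N})$ norms. (The only cosmetic remark: $N^{2^{-s}}\ge 1$ already follows from $N\ge 1$, and the harmless rounding constant is absorbed anyway by the factors $C_{p,\nu}^{s-1}$ in the subsequent iteration, so nothing hinges on $s\ge 2$ here.)
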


\bigskip

Using  parabolic rescaling as in the proof of Theorem \ref{fp3}, we get that for each square $R\subset[0,1]^2$ with side length $N^{-\rho}$,  $\rho\le \frac12$
\begin{equation}
\label{fe22}
\|E_Rg\|_{L^p(w_{B_N})}\le D(N^{1-2\rho},p)(\sum_{\Delta\subset R\atop{l(\Delta)=N^{-1/2}}}\|E_\Delta g\|_{L^p(w_{B_N})}^p)^{1/p}.
\end{equation}
\bigskip

Fix a pair of $\nu$-transverse  rectangles $R_1,R_2\subset [0,1]^2$ with arbitrary side lengths and assume $g_i$ is supported on $R_i$. Start with Lemma \ref{wlem0081}, continue with iterating \eqref{fe21} $s-1$ times, and invoke \eqref{fe22} at each step to write
$$\|(\prod_{i=1}^2|E_{R_i}g_i|)^{1/2}\|_{L^{p}({B_N})}\le N^{2^{-s}}C_{p,\nu}^{s-1}(\prod_{i=1}^2\sum_{\atop{l(\Delta)=N^{-1/2}}}\|E_{\Delta}g_i\|_{L^{p}(w_{B_N})}^p)^{\frac{1}{2p}}\times$$
$$\times N^{\frac{\kappa_p}{2}(\frac12-\frac1p)(1-\kappa_p)^{s-2}}\cdot\ldots\cdot N^{\frac{\kappa_p}{2^{s-2}}(\frac12-\frac1p)(1-\kappa_p)}N^{\frac{\kappa_p}{2^{s-1}}(\frac12-\frac1p)}\|(\prod_{i=1}^2\sum_{\atop{l(\tau)=N^{-1/2}}}|E_{\Delta}g_i|^2)^{\frac1{4}}\|_{L^{p}(w_{B_N})}^{(1-\kappa_p)^{s}}\times$$
\begin{equation}
\label{fe34}
\times D(N^{1-2^{-s+1}},p)^{\kappa_p}D(N^{1-2^{-s+2}},p)^{\kappa_p(1-\kappa_p)}\cdot\ldots\cdot D(N^{1/2},p)^{\kappa_p(1-\kappa_p)^{s-2}}.
\end{equation}

Note that
$$\|(\sum_{\atop{l(\Delta)=N^{-1/2}}}|E_{\Delta}g_i|^2)^{\frac1{2}}\|_{L^{p}(w_{B_N})}\le N^{\frac12-\frac1p}(\sum_{\atop{l(\Delta)=N^{-1/2}}}\|E_{\Delta}g_i\|_{L^{p}(w_{B_N})}^p)^{1/p},$$
is an immediate  consequence of Minkowski's and H\"older's inequalities. Using this, \eqref{fe34} can be rewritten as follows 
$$D_{multi}(N,p,\nu)\le C_{p,\nu}^{s-1} N^{2^{-s}}N^{\kappa_p 2^{-s}(1-\frac2p)\frac{1-(2(1-\kappa_p))^{s-1}}{2\kappa_p-1}}\times$$
\begin{equation}
\label{fe23}
\times D(N^{1-2^{-s+1}},p)^{\kappa_p}D(N^{1-2^{-s+2}},p)^{\kappa_p(1-\kappa_p)}\cdot\ldots\cdot D(N^{1/2},p)^{\kappa_p(1-\kappa_p)^{s-2}}    N^{O_p((1-\kappa_p)^s)}.
\end{equation}
\bigskip

Let $\gamma_p$ be the unique positive number such that
$$\lim_{N\to\infty}\frac{D(N,p)}{N^{\gamma_p+\epsilon}}=0,\;\text{for each }\epsilon>0$$
and
\begin{equation}
\label{fe27}
\limsup_{N\to\infty}\frac{D(N,p)}{N^{\gamma_p-\epsilon}}=\infty,\;\text{for each }\epsilon>0.
\end{equation}
The existence of such $\gamma_p$ is guaranteed by \eqref{fe30} and \eqref{fe3008}.
Recall that our goal is to prove that $\gamma_6=\frac13.$
By using the fact that $D(N,p)\lesssim_\epsilon N^{\gamma_p+\epsilon}$ in \eqref{fe23}, it follows that for each $\nu$ , $\epsilon>0$ and $s\ge 2$
\begin{equation}
\label{fe32}
\limsup_{N\to\infty}\frac{D_{multi}(N,p,\nu)}{N^{\gamma_{p,\epsilon,s}}}<\infty
\end{equation}
 where
$$\gamma_{p,\epsilon,s}=2^{-s}+\kappa_p(\gamma_p+\epsilon)(\frac{1-(1-\kappa_p)^{s}}{\kappa_p}-2^{-s+1}\frac{1-(2(1-\kappa_p))^{s}}{2\kappa_p-1})+$$$$+\kappa_p 2^{-s}(1-\frac2p)\frac{1-(2(1-\kappa_p))^{s-1}}{2\kappa_p-1}+O_p((1-\kappa_p)^s).$$

We will show now that if $p>6$ then
$$\gamma_p\le \frac{p-6}{2p-8}+\frac12-\frac1p.$$
If we manage to do this, it will suffice to let $p\to 6$ to get $\gamma_6\le \frac13$, hence $\gamma_6=\frac13$, as desired. We first note that
\begin{equation}
\label{fe24}
2(1-\kappa_p)=\frac4{p-2}<1
\end{equation}
Assume for contradiction that
\begin{equation}
\label{fe26}
\gamma_p>\frac{p-6}{2p-8}+\frac12-\frac1p.
\end{equation}
A simple computation using \eqref{fe24} and \eqref{fe26} shows that for $s$ large enough, and $\epsilon$ small enough we have
$\gamma_{p,\epsilon,s}<\gamma_p.$

Choose now $\nu$ so small that
\begin{equation}
\label{fe33} 
\gamma_{p,\epsilon,s}':=\gamma_{p,\epsilon,s}+\epsilon(\nu,p)<\gamma_p,
\end{equation}
and 
\begin{equation}
\label{fe35}
\frac12-\frac1p+\epsilon(\nu,p)<1-\frac4p
\end{equation}
 where $\epsilon(\nu,p)$ is from Theorem \ref{ft2}. The values of $\epsilon,s,\nu$ are fixed for the rest of the argument.

We will argue that
$$\limsup_{N\to\infty}\frac{D(N,p)}{N^{\max\{\gamma_{p,\epsilon,s}',\frac12-\frac1p+\epsilon(\nu,p)\}}}<\infty.$$
This will contradict either \eqref{fe27} (in light of  \eqref{fe33}) or  \eqref{fe30} (in light of  \eqref{fe35}). We distinguish two cases.

If $\gamma_{p,\epsilon,s}\le \frac12-\frac1p$, then \eqref{fe32} and \eqref{fe31} lead to
$$\limsup_{N\to\infty}\frac{D(N,p)}{N^{\frac12-\frac1p+\epsilon(\nu,p)}}<\infty.$$
If $\gamma_{p,\epsilon,s}\ge \frac12-\frac1p$, then \eqref{fe32} and \eqref{fe31} lead to
$$\limsup_{N\to\infty}\frac{D(N,p)}{N^{\gamma_{p,\epsilon,s}'}}<\infty.$$
In conclusion, inequality \eqref{fe26} can not hold, and the proof of Theorem \ref{ft1} is complete.

\label{fslast}

\end{document}